\newcommand{\Q}{\mathbb{Q}}
\newcommand{\R}{\mathbb{R}}
\newcommand{\A}{\mathbb{A}}
\newcommand{\C}{\mathbb{C}}
\newcommand{\Z}{\mathbb{Z}}
\newcommand{\HH}{\mathcal{H}}
\newcommand{\OO}{\mathcal{O}}
\newcommand{\pp}{\mathfrak{p}}
\newcommand{\qq}{\mathfrak{q}}
\newcommand{\D}{\mathfrak{D}}
\newcommand{\cl}{\mathit{Cl}}
\newcommand{\TT}{\mathbb{T}}
\newcommand{\SL}{\operatorname{SL}}
\newcommand{\GL}{\operatorname{GL}}
\newcommand{\Tr}{\operatorname{Tr}}
\newcommand{\Shim}{\operatorname{Shim}}
\def\ll#1{{\left\langle{#1}\right\rangle}}
\def\id#1{{\mathfrak{#1}}}
\numberwithin{equation}{section}
\theoremstyle{plain}
\newtheorem{proposition}[equation]{Proposition}
\newtheorem{theorem}[equation]{Theorem}
\newtheorem*{thm*}{Theorem}
\newtheorem*{definition}{Definition}
\newtheorem{lemma}[equation]{Lemma}
\theoremstyle{remark}
\newtheorem{remark}[equation]{Remark}
\newtheorem{conjecture}{Conjecture}
\begin{document}

\title{Preimages for the Shimura map on Hilbert modular forms}
\author{Nicol\'as Sirolli}
\email{nsirolli@dm.uba.ar}
\address{Departamento de Matem\'atica, Facultad de Ciencias Exactas y Naturales, Universidad de Buenos Aires. Ciudad Universitaria (C1428EGA), Buenos Aires, Argentina}

\thanks{The author was partially supported by a CONICET PhD Fellowship.}

\begin{abstract}

In this article we give a method to construct preimages for the Shimura correspondence on Hilbert modular forms of odd and square-free level. The method relies in the ideas presented for the rational case by Pacetti and Tornar{\'{\i}}a, and is such that the Fourier coefficients of the preimages constructed can be computed explicitly.

\end{abstract}

\maketitle

\section*{Introduction}\label{sec:intro}

The Shimura map is a Hecke linear map between half-integral weight modular forms and integral weight ones, introduced in \cite{shim-halfint} in the classical setting and generalized in \cite{shim-hhalfint} to Hilbert modular forms, as well as to the automorphic setting by the work of Waldspurger, Flicker and others. Computing preimages for the Shimura map became an interesting subject, after the formulas given by Waldspurger \textit{et al.} relating the central values of twists of the $L$-series associated to an integral weight modular form $f$ with the Fourier coefficients of a half-integral weight form $g$ mapping to $f$ by the Shimura map. Such formulas have been generalized to the Hilbert setting in \cite{shim-coef} and \cite{mao}.

The problem of computing preimages for the Shimura map in the classical setting has been considered, for example, in \cite{shintani} and \cite{gross}. Our method for computing preimages in the Hilbert setting relies in the ideas present in \cite{tornaria}, which in turn generalize the method of Gross. The preimages are obtained by considering certain ternary theta series associated to ideals in quaternion algebras. The problem of computing these ideals is thus crucial for our method, and has been studied in \cite{dem-voi} and \cite{ariel-yo}.

The correspondence between ideals in quaternion algebras and half-integral weight modular forms has its automorphic counterpart, and was studied by  \cite{waldspurger} over any number field, and in particular in the Hilbert setting. The advantage of our method is that, being more explicit, it allows to compute effectively the Fourier coefficients of the preimages.

In \cite{xue}, the author also follows the method of Gross for computing half-integral weight Hilbert modular forms to prove a Waldspurger's type formula, but with several restrictions such as working with prime level and odd class number of the base field, and with no focus on Hecke operators nor the Shimura correspondence.

\medskip

We start this article by recalling basic definitions regarding Hilbert modular forms and setting some notation. Some good references for the theory of Hilbert modular forms are Garrett's book \cite{garrett} and Shimura's article \cite{shim-special}.

In the second section, given a totally definite quaternion algebra $B$ and an Eichler order $R$ in it, we define Hecke operators acting on the space $M(R)$ generated by left ideal classes representatives for $R$, showing that they satisfy properties analogous to those of the Hecke operators on Hilbert modular forms.

In the third section we introduce half-integral weight Hilbert modular forms, following \cite{shim-hhalfint}. We state the main properties of the Hecke operators acting on them, and we recall Shimura's correspondence.

In the fourth section we show how certain ternary theta series associated to the left ideal classes of a given order $R$ can be used to produce Hilbert modular forms of parallel weight $3/2$, thus giving a Hecke linear map from the space $M(R)$ to the space of Hilbert modular forms of parallel weight $3/2$.

In the fifth section we use the results from the previous sections to construct preimages of the Shimura map, at least in the case where the level of the modular form is odd and square-free. This is stated in Theorem~\ref{thm:teo_ppal}, which is our main result. We also state a Waldspurger's type formula relating the Fourier coefficients of the preimages and central values of twisted $L$-functions.

In the final section we consider the space of Hilbert modular cusp forms over $F=\Q[\sqrt{5}]$, with level $(6+\sqrt{5})$ and parallel weight $2$. This space is $1$-dimensional, and it is spanned by a newform that corresponds to an elliptic $E$ curve over $F$. We apply our method to this cusp form to construct a parallel weight $3/2$ modular form in Shimura correspondence with it, and compare its zero coefficients with the ranks of imaginary quadratic twists of $E$.

\medskip

We remark that though for simplicity we consider the Shimura correspondence in parallel weights $3/2$ and $2$, our techniques can be used for general weights, adding spherical polynomials to the ternary theta series.

\section{Hilbert modular forms}\label{sec:1}

Let $F$ be a totally real number field of degree $d$ over $\Q$, with different ideal $\id{d}$. We let $\mathbf{a}$ denote the set of all embeddings $\tau:F\hookrightarrow\R$, and for $\xi\in F$ and $\tau\in\mathbf{a}$, we denote $\tau(\xi)=\xi_\tau$. We let $F^+$ denote the set of $\xi\in F$ such that $\xi_\tau>0$ for all $\tau\in\mathbf{a}$, and we let
\[
 \GL_2^+(F)=\{\gamma \in \GL_2(F) : \det \gamma \in F^+\}.
\]

Let $\HH$ denote the Poincar\'e upper-half plane. The group $\GL_2^+(\R)^\mathbf{a}$ acts on $\HH^\mathbf{a}$ component-wise, and $\GL_2^+(F)$ also acts on $\HH^\mathbf{a}$ via the natural embedding $\GL_2^+(F) \hookrightarrow \GL_2^+(\R)^\mathbf{a}$. If $\gamma \in \GL_2^+(\R)^\mathbf{a}$, with $\gamma_\tau =\big(\begin{smallmatrix} a_\tau & b_\tau \\ c_\tau &d_\tau \end{smallmatrix}\big)$, we let $j(\gamma,z)$ denote the automorphy factor
\[
 j(\gamma,z)=\prod_{\tau\in\mathbf{a}} (c_\tau z_\tau+d_\tau).
\]
Again, this also makes sense for $\gamma \in \GL_2^+(F)$. Given a function $g:\HH^\mathbf{a}\to\C$ and $\gamma \in \GL_2^+(F)$, we denote by $g\vert\gamma$ the function given by 
\[
 (g\vert\gamma)(z)=N_{F/\Q}(\det\gamma) j(\gamma,z)^{-2} g(\gamma z).
\]

Let $\OO$ be the ring of integers of $F$. Given fractional ideals $\id{r},\id{n}$, let
\[
  \tilde\Gamma[\id{r},\id{n}] = \big\{\gamma=\big(\begin{smallmatrix} a & b \\ c & d \end{smallmatrix}\big) \in\GL_2^+(F):a,d\in\OO,b\in \id{r}^{-1},c\in\id{r}\id{n},\det\gamma\in\OO^\times\big\}.
\]
The space of Hilbert modular forms of weight $\mathbf{2}$ with respect to $\tilde\Gamma[\id{r},\id{n}]$, which we denote by $M_\mathbf{2}(\tilde\Gamma[\id{r},\id{n}])$, is the space of holomorphic functions $g:\HH^\mathbf{a}\to\C$ such that
\begin{itemize}
\item $g\vert\gamma=g\quad \forall\, \gamma\in\tilde\Gamma[\id{r},\id{n}]$.
\item If $d=1$, $g(z)$ is holomorphic at the cusps.
\end{itemize}

Let $e_F:F\times \HH^\mathbf{a} \to \C$ be the exponential function given by
\[
 e_F(\xi, z)=\exp{\big(2\pi i\sum_{\tau\in\mathbf{a}} \xi_\tau z_\tau\big)}.
\]
For a fractional ideal $\id{a}$, let $\id{a}^+=\id{a}\cap F^+$, and denote by $\id{a}^\vee$ its dual with respect to the trace form. Then every $g\in M_\mathbf{2}(\tilde\Gamma[\id{r},\id{n}])$ has a Fourier series expansion
\[
g(z)=\sum_{\xi\in ((\id{r}^{-1})^\vee)^+\cup\{0\}} c(\xi,g) e_F\big(\xi,z\big).
\]
We say that $g$ is \emph{cuspidal} if $c(0,g\vert\gamma)=0$ for all $\gamma\in\GL_2^+(F)$. The subspace of such $g$ is denoted by $S_\mathbf{2}(\tilde\Gamma[\id{r},\id{n}])$.

\medskip

Take $\id{b}_1,\dots,\id{b}_r\subseteq\OO$ representatives for the narrow class group $\cl^+(F)$. Let $\id{c}$ be an integral ideal. The spaces of \emph{Hilbert modular forms} and \emph{Hilbert modular cusp forms} of level $\id{c}$ are defined respectively by
\[
 M_\mathbf{2}(\id{c})=\bigoplus_{l=1}^r
M_\mathbf{2}(\tilde\Gamma[\id{b}_l,\id{c}]), \quad
S_\mathbf{2}(\id{c})=\bigoplus_{l=1}^r
S_\mathbf{2}(\tilde\Gamma[\id{b}_l,\id{c}]).
\]

Every $g\in M_\mathbf{2}(\id{c})$ has attached Fourier coefficients indexed by integral ideals. Given a non-zero integral ideal $\id{m}$, we let
\[
c(\id{m},g)=c(\xi,g_l),\quad \text{ with } \xi\in\id{b}_l^+ \text{ such that } \id{m}=\xi\id{b}_l^{-1},
\]
and this is well defined.

\section{Quaternionic modular forms}\label{sec:2}

We refer to \cite{vig} for the definitions and basic results concerning the arithmetic of quaternion algebras.

Let $B$ a totally definite quaternion algebra over $F$, i.e. such that $B_\tau=B\otimes_F F_\tau$ is ramified quaternion algebra over $F_\tau$ for every $\tau\in\mathbf{a}$. We fix an Eichler order $R$ in $B$ of discriminant $\D$, and we let $\mathfrak{I}(R)$ denote the set of invertible (i.e., locally principal) left $R$-ideals. Given an ideal $I$, we denote its right order by $R_r(I)$. Given a prime ideal $\pp$ of $F$, by $B_\pp,R_\pp,I_\pp$ we denote the different completions at $\pp$. We denote by $N$ the reduced norm in $B$ (and in $B_\pp$).

Two ideals $I,J\in \mathfrak{I}(R)$ are equivalent if there exists $x\in B^\times$ such that $I=Jx$. We denote by $[I]$ the equivalence class of $I$ under this relation, and we denote by $\cl(R)$ the set of equivalence classes.

The space of \emph{quaternionic modular forms} for $R$ is the vector space over $\C$ spanned by $\cl(R)$, and is denoted by $M(R)$. On $M(R)$ we consider the inner product defined by
\[
 \ll{[I],[J]}=\#\{x\in \OO^\times\backslash B^\times: I x = J\}=\begin{cases}
                   [R_r(I)^\times:\OO^\times], & [I]=[J],\\
                   0, & [I]\neq[J],
                  \end{cases}
\]
where $[R_r(I)^\times:\OO^\times]$ denotes the (finite) index of $\OO^\times$ in $R_r(I)^\times$.

Given a fractional ideal $\id{a}$, denote by $[\id{a}]$ its equivalence class in the class group $\cl(F)$. For each $[\id{a}]\in\cl(F)$, denote $\cl_{[\id{a}]}(R)=\{[I]\in\cl(R):[N(I)]=[\id{a}]\}$. We let
\[
e_{[\id{a}]}=\sum_{[I]\in\cl_{[\id{a}]}(R)}\frac{1}{\ll{[I],[I]}} [I] \quad \in M(R).
\]
We let $E(R)=\ll{e_{[\id{a}]}:[\id{a}]\in\cl(F)}_{\C}$. The orthogonal complement of $E(R)$ is denoted by $S(R)$ and is called the space of \emph{quaternionic cusp forms}.

\medskip

Let $\id{m}$ be a non-zero integral ideal. For $I\in \mathfrak{I}(R)$ denote
\[
 t_\id{m}(I)=\{J\in \mathfrak{I}(R): J\subseteq I,[I:J]=\id{m}^2\},
\]
where $[I:J]$ denotes the index of $J$ in $I$. We let $T_\id{m}$ be the $\id{m}$-th Hecke operator acting on $M(R)$, defined by
\[
 T_\id{m}([I])=\sum_{J\in t_\id{m}(I)} [J].
\]

\begin{lemma}\label{lemma:nro-orbitas}
 Let $\pi_\pp$ denote a local uniformizer at $\pp$. Let $x_\id{p}\in M_2(\OO_\id{p})$ with $\pi_\id{p}\mid\det(x_\id{p})$. Then,
\begin{align*}
  \#\SL_2(\OO_\id{p}) & \backslash\big\{y_\id{p}\in
M_2(\OO_\id{p}):\det(y_\id{p})=\pi_\id{p},x_\id{p}y_\id{p}^{-1}\in
M_2(\OO_\id{p})\big\} \\ & =
\begin{cases}
1, &  x_\id{p}\notin \pi_\pp M_2(\OO_\id{p}), \\
N(\id{p})+1, & x_\id{p}\in \pi_\pp M_2(\OO_\id{p}).
\end{cases}
\end{align*}

\end{lemma}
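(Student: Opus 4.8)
The plan is to reinterpret the counting problem in terms of lattices in $\OO_\pp^2$ and then reduce it to a count of lines over the residue field $k=\OO_\pp/\pp$. First I would observe that left multiplication by $\SL_2(\OO_\pp)$ preserves the row lattice $\OO_\pp^2\,y_\pp$ spanned by the rows of $y_\pp$, and that among matrices of determinant exactly $\pi_\pp$ two lie in the same $\SL_2(\OO_\pp)$-orbit precisely when they span the same row lattice. Indeed, if $y_\pp'$ and $y_\pp$ span the same lattice then $y_\pp'=g\,y_\pp$ for some $g\in\GL_2(\OO_\pp)$, and comparing determinants forces $\det g=1$; conversely every sublattice $L\subseteq\OO_\pp^2$ with $\OO_\pp^2/L\cong k$ admits a basis of determinant exactly $\pi_\pp$, unique up to $\SL_2(\OO_\pp)$. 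Thus the orbits being counted biject with the sublattices $L\subseteq\OO_\pp^2$ with $\OO_\pp^2/L\cong k$.

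Next I would rewrite the constraint $x_\pp y_\pp^{-1}\in M_2(\OO_\pp)$. Writing $x_\pp=(x_\pp y_\pp^{-1})\,y_\pp$ shows this holds exactly when the rows of $x_\pp$ are $\OO_\pp$-combinations of the rows of $y_\pp$, i.e. when the row lattice $L_x=\OO_\pp^2\,x_\pp$ is contained in $L=\OO_\pp^2\,y_\pp$. One checks this condition depends only on the $\SL_2(\OO_\pp)$-orbit of $y_\pp$, so the quantity to compute is the number of sublattices $L$ with $\OO_\pp^2/L\cong k$ and $L_x\subseteq L$.

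The final step is to pass to the residue field. Any such $L$ contains $\pp\OO_\pp^2$, so it corresponds to the line $\bar L=L/\pp\OO_\pp^2$ in the plane $k^2=\OO_\pp^2/\pp\OO_\pp^2$, and this sets up a bijection between the $L$ under consideration and $\mathbb{P}^1(k)$, giving $N(\pp)+1$ lattices in all. Moreover $L_x\subseteq L$ if and only if $\bar L_x\subseteq\bar L$, where $\bar L_x$ is the row space of the reduction $\bar x_\pp\in M_2(k)$: the forward implication is immediate, and the reverse follows because $L$ is the full preimage of $\bar L$ under reduction. Since $\pi_\pp\mid\det x_\pp$ the matrix $\bar x_\pp$ is singular, hence of rank $0$ or $1$. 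If $x_\pp\in\pi_\pp M_2(\OO_\pp)$ then $\bar x_\pp=0$ and $\bar L_x=0$, so the containment holds for every line and all $N(\pp)+1$ lattices qualify; otherwise $\bar x_\pp\neq 0$ has rank $1$, so $\bar L_x$ is a single line and $\bar L_x\subseteq\bar L$ forces $\bar L=\bar L_x$, leaving exactly one lattice. This gives the two cases in the statement.

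The delicate points are bookkeeping rather than conceptual: I expect the main care to be needed in pinning down the orbit–lattice bijection with the exact (not merely up-to-units) normalization $\det y_\pp=\pi_\pp$, and in verifying both directions of the equivalence $L_x\subseteq L\iff\bar L_x\subseteq\bar L$. Once these are in place, the rank dichotomy of $\bar x_\pp$ over $k$ yields the result at once.
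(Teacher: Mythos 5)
Your proof is correct, and it takes a genuinely different route from the paper. The paper argues by brute force with explicit coset representatives: it writes down the standard system $\big(\begin{smallmatrix} \pi_\pp & 0 \\ 0 & 1\end{smallmatrix}\big)$, $\big(\begin{smallmatrix} 1 & \alpha \\ 0 & \pi_\pp\end{smallmatrix}\big)$ (with $\alpha$ running over residues mod $\pp$) for the left $\SL_2(\OO_\pp)$-action on determinant-$\pi_\pp$ matrices, translates the condition $x_\pp y_\pp^{-1}\in M_2(\OO_\pp)$ into explicit congruences on the entries $a,b,c,d$ of $x_\pp$, and then counts solutions using $\pi_\pp\mid ad-bc$. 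You instead replace orbits by row lattices $L$ with $\OO_\pp^2/L\cong k$, identify these with $\mathbb{P}^1(k)$, and reduce the divisibility condition to $\bar L_x\subseteq\bar L$ over the residue field, after which the rank dichotomy for the singular matrix $\bar x_\pp$ finishes the count; all the intermediate claims you flag (the orbit--lattice bijection with the exact normalization $\det y_\pp=\pi_\pp$, via unit-rescaling of a basis row and comparing determinants, and the two directions of $L_x\subseteq L\iff \bar L_x\subseteq\bar L$, using that $L\supseteq\pi_\pp\OO_\pp^2$ is the full preimage of $\bar L$) check out. Your version is coordinate-free, makes it conceptually transparent that only the rank of $\bar x_\pp$ matters, and in particular makes the count of $1$ in the case $x_\pp\notin\pi_\pp M_2(\OO_\pp)$ immediate, where the paper's computation needs a small case analysis exploiting $\pi_\pp\mid\det x_\pp$; it also generalizes readily (e.g.\ to $\det y_\pp=\pi_\pp^n$, via sublattices or vertices in the Bruhat--Tits tree). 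What the paper's computational choice buys is reusability: the same explicit representatives are recycled verbatim in the proof of Lemma~\ref{lemma:cuentas-coeff}, where the relevant condition ($y_\pp x_\pp y_\pp^{-1}\in F_\pp+M_2(\OO_\pp)$, a conjugation rather than a divisibility) is no longer a clean lattice-containment statement, so having the concrete matrices on hand there is genuinely convenient.
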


\begin{proof}
 Let $q=N(\id{p})$, and let $\alpha_1,\dots,\alpha_q\in\OO$ be representatives for the residual classes modulo $\id{p}$. Then
\[
 \left(\begin{matrix} \pi_\id{p} & 0 \\ 0 & 1\end{matrix}\right),
 \left(\begin{matrix} 1 & \alpha_1 \\ 0 & \pi_\id{p}\end{matrix}\right),\dots,
 \left(\begin{matrix} 1 & \alpha_q \\ 0 & \pi_\id{p}\end{matrix}\right)
\]
is a system of representatives for the action of $\SL_2(\OO_\id{p})$ on $\{y_\id{p}\in M_2(\OO_\id{p}):\det(y_\id{p})=\pi_\id{p}\}$ by left multiplication. The result follows then from the fact that, given $x_\id{p}=\left(\begin{smallmatrix} a & b \\ c & d \end{smallmatrix}\right)\in M_2(\OO_\id{p})$,
\begin{align*}
x_\id{p} \left(\begin{matrix} \pi_\id{p} & 0 \\ 0 & 1\end{matrix}\right)^{-1}
\in M_2(\OO_\id{p})& \Longleftrightarrow \pi_\id{p}\mid a, \pi_\id{p}\mid c, \\
x_\id{p} \left(\begin{matrix} 1 & \alpha \\ 0 & 
\pi_\id{p}\end{matrix}\right)^{-1} \in M_2(\OO_\id{p}) & \Longleftrightarrow  
\pi_\id{p}\mid b-\alpha a, \pi_\id{p}\mid d-\alpha c.
\end{align*}
\end{proof}

The Hecke operators on $M(R)$ satisfy the following equalities, which are also satisfied by the Hecke operators on Hilbert modular forms (see \cite[(2.12)]{shim-special}).

\begin{proposition}\label{prop:hecke-cuat}
Let $\id{m},\id{n}$ be integral ideals, and let $\pp$ be a prime ideal such that $\pp\nmid\D$. The Hecke operators on $M(R)$ satisfy:
\begin{enumerate}
 \item $T_\id{m} T_\id{n} = T_{\id{m}\id{n}}$, if $(\id{m}:\id{n})=1$.
 \item $T_{\pp^{k+2}}=T_{\pp^{k+1}} T_\pp - N(\pp) \pp T_{\pp^k}$, for every $k\geq 0$.
 \item $T_\id{m} T_\pp = T_{\id{m}\pp}+N(\pp) \pp T_{\id{m}/\pp}$, if $\id{p}\mid\id{m}$.
\end{enumerate}
\end{proposition}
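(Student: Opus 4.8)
The plan is to deduce all three identities from a purely local analysis at $\pp$, with Lemma~\ref{lemma:nro-orbitas} as the combinatorial engine. The two structural facts I would use are that an invertible left $R$-ideal $J$ is determined by its localizations $J_\pp$ (with $J_\pp=R_\pp$ for almost all $\pp$), and that the index $[I:J]$ factors as the product of the local indices $[I_\pp:J_\pp]$; gluing prescribed local left $R_\pp$-ideals back into a global invertible left $R$-ideal is legitimate precisely because only finitely many localizations differ from those of $I$. Since $\pp\nmid\D$ I identify $B_\pp\cong M_2(F_\pp)$ and $R_\pp\cong M_2(\OO_\pp)$, so that the local sublattices of index $\pp^2$ are exactly those controlled by the Lemma. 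I read the symbol $\pp$ occurring in (2) and (3) as the scaling operator $[I]\mapsto[\pp I]$ on $M(R)$, and I record the index computations $[I_\pp:M_2(\OO_\pp)y_\pp]=\pp^2$ when $\det y_\pp=\pi_\pp$, and $[L:\pp L]=\pp^4$, both obtained from the determinant of the regular representation (reduced norm squared).

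For the coprime case (1) I would exhibit the bijection $(J,K)\mapsto K$ between pairs with $J\in t_\id{n}(I)$, $K\in t_\id{m}(J)$ and the set $t_{\id{m}\id{n}}(I)$. Given $K\in t_{\id{m}\id{n}}(I)$, an intermediate ideal $J$ must satisfy $[I_\pp:J_\pp]=1$ for $\pp\nmid\id{n}$ and $[J_\pp:K_\pp]=1$ for $\pp\mid\id{n}$, because $(\id{m}:\id{n})=1$ makes the $\pp$-part of $\id{m}^2$ trivial at primes dividing $\id{n}$ and conversely. Hence $J$ is forced: $J_\pp=I_\pp$ for $\pp\nmid\id{n}$ and $J_\pp=K_\pp$ for $\pp\mid\id{n}$, and one checks directly that this does define an element of $t_\id{n}(I)$ with $K\in t_\id{m}(J)$. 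This gives (1) at once.

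The heart is relation (3), where $\pp\mid\id{m}$. Writing $T_\id{m}T_\pp([I])=\sum_{J\in t_\pp(I)}\sum_{K\in t_\id{m}(J)}[K]$, every $K$ occurring lies in $t_{\id{m}\pp}(I)$, and its multiplicity equals the number of intermediate $J$ with $K\subseteq J\subseteq I$ and $[I:J]=\pp^2$. At primes $\neq\pp$ such a $J$ is forced to equal $I$, so the multiplicity is exactly the local count of Lemma~\ref{lemma:nro-orbitas} applied with $x_\pp$ a generator of $K_\pp$ as a left $R_\pp$-ideal (note $\pi_\pp\mid\det x_\pp$ since the $\pp$-valuation is $\geq 2$). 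The Lemma gives multiplicity $1$ when $K_\pp\not\subseteq\pp I_\pp$ and $N(\pp)+1$ when $K_\pp\subseteq\pp I_\pp$. I then split $t_{\id{m}\pp}(I)$ accordingly: the map $L\mapsto\pp L$ identifies $t_{\id{m}/\pp}(I)$ with the non-primitive classes $\{K:K_\pp\subseteq\pp I_\pp\}$, using $[L:\pp L]=\pp^4$ to verify $[I:\pp L]=(\id{m}\pp)^2$. Therefore
\[
T_\id{m}T_\pp([I])=\sum_{K\in t_{\id{m}\pp}(I)}[K]+N(\pp)\sum_{L\in t_{\id{m}/\pp}(I)}[\pp L]=T_{\id{m}\pp}([I])+N(\pp)\,\pp\,T_{\id{m}/\pp}([I]),
\]
which is (3). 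Finally (2) is the special case $\id{m}=\pp^{k+1}$ of (3), after transposing $T_{\pp^{k+2}}$ to the left-hand side.

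The main obstacle is the local bookkeeping rather than any single hard estimate: one must check carefully that the global index condition $[I:J]=\id{m}^2$ corresponds prime by prime to the determinant valuations entering Lemma~\ref{lemma:nro-orbitas}, that the glued local data genuinely produces invertible left $R$-ideals, and that the primitive/non-primitive dichotomy is matched correctly with the scaling operator $\pp$. Once this dictionary is in place, (1) is a counting bijection and (2)--(3) follow mechanically from the two-case output of the Lemma.
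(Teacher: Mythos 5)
Your proposal is correct and is essentially the paper's own proof: part (1) is the same forced-intermediate-ideal bijection, and the counting behind (2)--(3) rests on exactly the same engine, namely the orbit count of Lemma~\ref{lemma:nro-orbitas} applied at the single prime $\pp$ (where the intermediate ideal is forced away from $\pp$), the primitive/imprimitive dichotomy $x_\pp\notin\pi_\pp M_2(\OO_\pp)$ versus $x_\pp\in\pi_\pp M_2(\OO_\pp)$, and the identification of the imprimitive ideals with $\pp$-scalings via $L\mapsto\pp L$. The only difference is organizational and cosmetic: you prove (3) directly for arbitrary $\id{m}$ with $\pp\mid\id{m}$ and recover (2) as the special case $\id{m}=\pp^{k+1}$, whereas the paper proves (2) for prime powers and deduces (3) from (1) and (2); since the local computation at $\pp$ is identical in both orderings, this changes nothing of substance.
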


\begin{proof} 

\begin{enumerate} 
 \item Let $I\in\mathfrak{I}(R)$. If $J\in t_\id{m}(L)$ with $L\in t_\id{n}(I)$, then $J\in t_\id{mn}(I)$. Moreover, since $(\id{m}:\id{n})=1$, for every $J\in t_\id{mn}(I)$ there exists a unique $L\in t_\id{n}(I)$ such that $J\in t_\id{m}(L)$, namely the ideal given by $L_\id{p}=I_\id{p}$ for $\id{p}\nmid\id{n}$ and $L_\id{p}=J_\id{p}$ for $\id{p}\mid\id{n}$. Hence
\[
 T_\id{mn}([I])=\sum_{L\in t_\id{n}(I)}\sum_{J\in t_\id{m}(L)}[J]=T_\id{m}(T_\id{n}([I])),
\]
which proves that $T_\id{mn}=T_\id{m} T_\id{n}$.
 \item Let $J\in\mathfrak{I}(R)$. Given $I\in t_{\id{p}^{k+2}}(J)$, write $I_\id{p}=J_\id{p} x_\id{p}$, with $x_\id{p}\in R_r(J_\id{p})$. Then we have a bijection
 \begin{align*}
  R_r(J_\id{p})^\times&\backslash\{y_\id{p}\in R_r(J_\id{p}):v_\id{p}(N(y_\id{p}))=1, x_\id{p} y_\id{p}^{-1} \in R_r(J_\id{p})\} \\ \longrightarrow & \{K\in t_\id{p}(J):I\in t_{\id{p}^{k+1}}(K)\},
 \end{align*}
assigning to each $y_\id{p}$ the ideal $K$ given locally by $K_\id{q}=J_\id{q}$ for $\id{q}\neq\id{p}$ and $K_\id{p}=J_\id{p} y_\id{p}$. Since $\id{p}\nmid \D$ we can identify $R_r(J_\id{p})$ with $M_2(\OO_\id{p})$. By the previous lemma, these sets have one element if $x_\id{p}\notin \pi_\id{p} M_2(\OO_\id{p})$, and $q+1$ elements otherwise. Hence, we have a non-disjoint union
\[
 t_{\id{p}^{k+2}}(J) = \bigcup_{K\in t_\id{p}(J)} t_{\id{p}^{k+1}}(K).
\]
If $I\in t_{\id{p}^{k+2}}(J)$ is such that $x_\id{p}=\pi_\id{p} z_\id{p}$ with $z_\id{p}\in M_2(\OO_\id{p})$, then letting $I'=\id{p}^{-1} I$ we have that $I'\in t_{\id{p}^k}(J)$. Conversely, for each $I'\in t_{\id{p}^k}(J)$ we have that $I=\id{p }I' \in t_{\id{p}^{k+2}}(J)$. Using this, the equality follows easily.

 \item This follows from $(1)$ and $(2)$.
\end{enumerate}

\end{proof}

The Hecke operators are normal with respect to $\ll{\,,\,}$, but not necessarily self-adjoint if $\cl(F)$ is non trivial, as we see in Proposition~\ref{prop:hecke_adj} below.

\medskip

There is an action of the group of fractional ideals on $\mathfrak{I}(R)$. Given a fractional ideal $\id{a}$ and $I\in \mathfrak{I}(R)$, we define $\id{a} I\in \mathfrak{I}(R)$ as the ideal locally given by $(\id{a} I)_\pp=R_\pp (x_\pp \xi_\pp)$, if $\id{a}$ and $I$ are locally given by $\id{a}_\pp=\OO_\pp \xi_\pp$, and $I_\pp=R_\pp x_\pp$, respectively. This induces an action of $\cl(F)$ on $M(R)$, which commutes with the action of the Hecke operators, and which preserves $\ll{\,,\,}$.

\begin{lemma}
Let $I,J\in\mathfrak{I}(R)$. Then, $I\in t_\id{m}(J)$ if and only if $\id{m}J\in t_\id{m}(I)$.
\end{lemma}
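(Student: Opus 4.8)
The plan is to check everything locally at each prime $\pp$ of $\OO$, which is legitimate because the inclusion of ideals, the index $[\,\cdot:\cdot\,]$, and the action of $\id{m}$ are all defined prime by prime; so $I\in t_\id{m}(J)$ holds if and only if the local inclusion and index conditions hold at every $\pp$, and likewise for $\id{m}J\in t_\id{m}(I)$. Writing $k=v_\pp(\id{m})$ and fixing a local uniformizer $\pi_\pp$, the first thing I would record is the identity $(\id{m}J)_\pp=\pi_\pp^{k}J_\pp$: if $J_\pp=R_\pp x_\pp$ then by definition $(\id{m}J)_\pp=R_\pp(x_\pp\pi_\pp^{k})$, and since $\pi_\pp^{k}\in F_\pp$ is central this equals $\pi_\pp^{k}R_\pp x_\pp=\pi_\pp^{k}J_\pp$. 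In particular $\id{m}J\subseteq I$ means $\pi_\pp^{k}J_\pp\subseteq I_\pp$ for all $\pp$.

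I would prove the forward implication first. Fix $\pp$, write $I_\pp=R_\pp x_\pp$ and $J_\pp=R_\pp y_\pp$ with $x_\pp,y_\pp\in B_\pp^\times$, and put $z_\pp=x_\pp y_\pp^{-1}$. Then $I_\pp\subseteq J_\pp$ is equivalent to $z_\pp\in R_\pp$, and since right multiplication by $z_\pp$ is an $F_\pp$-linear endomorphism of the rank-$4$ module $R_\pp$ whose determinant equals $N(z_\pp)^2$, the hypothesis $[J_\pp:I_\pp]=\pp^{2k}$ forces $v_\pp(N(z_\pp))=k$, i.e. $N(z_\pp)\OO_\pp=\pi_\pp^{k}\OO_\pp$. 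The crucial point is the containment $\pi_\pp^{k}J_\pp\subseteq I_\pp$, which amounts to $\pi_\pp^{k}z_\pp^{-1}\in R_\pp$. Here I would invoke the main involution: as $z_\pp\in R_\pp$ has integral reduced trace, its conjugate $\bar z_\pp=\Tr(z_\pp)-z_\pp$ also lies in $R_\pp$, and $z_\pp\bar z_\pp=N(z_\pp)$ gives $\pi_\pp^{k}z_\pp^{-1}=\big(\pi_\pp^{k}/N(z_\pp)\big)\bar z_\pp$; since $\pi_\pp^{k}/N(z_\pp)\in\OO_\pp^\times$ this lies in $R_\pp$. With the sandwich $\pi_\pp^{k}J_\pp\subseteq I_\pp\subseteq J_\pp$ in hand, the remaining index is automatic: from $[J_\pp:\pi_\pp^{k}J_\pp]=\pp^{4k}$ (scaling a rank-$4$ lattice by $\pi_\pp^{k}$) and $[J_\pp:I_\pp]=\pp^{2k}$, multiplicativity of the index yields $[I_\pp:\pi_\pp^{k}J_\pp]=\pp^{2k}$. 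Running over all $\pp$ gives $\id{m}J\in t_\id{m}(I)$.

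For the converse I would avoid redoing the computation and use the scaling invariance $A\in t_\id{m}(B)\Leftrightarrow\id{a}A\in t_\id{m}(\id{a}B)$, valid for any fractional ideal $\id{a}$ because multiplication by $\id{a}$ preserves inclusions and indices. Assuming $\id{m}J\in t_\id{m}(I)$, the forward implication applied to the pair $(\id{m}J,I)$ produces $\id{m}I\in t_\id{m}(\id{m}J)$, and scaling by $\id{m}^{-1}$ then gives $I\in t_\id{m}(J)$, completing the equivalence.

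The one genuinely nontrivial step is the containment $\pi_\pp^{k}J_\pp\subseteq I_\pp$. It is false for arbitrary sublattices of index $\pp^{2k}$ — the elementary divisors could be as unbalanced as $(1,1,1,\pi_\pp^{2k})$, in which case $\pi_\pp^{k}J_\pp\not\subseteq I_\pp$ — and what saves it is exactly that $I_\pp$ and $J_\pp$ are \emph{locally principal} left $R_\pp$-ideals, so their ratio is a single element $z_\pp$ whose conjugate clears the denominator. The remaining ingredients (the reduction to local statements, the determinant formula for right multiplication, valid uniformly at ramified and unramified primes, and the converse by scaling) are routine bookkeeping.
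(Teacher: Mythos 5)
Your proof is correct and follows essentially the same route as the paper's: localize at each prime, express the quotient of the two locally principal ideals as a single quaternion $z_\pp$ with $v_\pp(N(z_\pp))=k$, and use the conjugate via $z_\pp\overline{z_\pp}=N(z_\pp)$ to obtain the containment $\id{m}_\pp J_\pp\subseteq I_\pp$, with the index then following by multiplicativity. Your explicit scaling argument for the converse is a welcome elaboration of what the paper dismisses with the bare remark that ``both statements are equivalent.''
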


\begin{proof}
 
Both statements are equivalent, so we will prove the ``only if'' statement. Let $I\in t_\id{m}(J)$. We prove that $\id{m}J\in t_\id{m}(I)$ by showing that this assertion holds in every completion.

Let $\id{p}$ be a prime ideal. Take $x_\id{p}$ in $R_r(I_\id{p})$ such that $I_\id{p}=J_\id{p}x_\id{p}$. Then, $\id{m}_\id{p}=\OO_\id{p} N(x_\id{p})$. Since $\overline{x_\id{p}}\in R_r(I_\id{p})$, we have that $\id{m}_\id{p}J_\id{p}\subseteq J_\id{p} x_\id{p} \overline{x_\id{p}}\subseteq I_\id{p}$. Furthermore, $[I_\id{p}:\id{m}_\id{p}J_\id{p}]=[J_\id{p}:J_\id{p}\overline{x_\id{p}}]=\id{m}_\id{p}^2$. 

\end{proof}

\begin{proposition}\label{prop:hecke_adj}

The adjoint of $T_\id{m}$ with respect to $\ll{\,,\,}$ is $[\id{m}^{-1}]T_\id{m}$.

\end{proposition}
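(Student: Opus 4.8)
The plan is to verify the adjunction identity on the natural basis $\cl(R)$ of $M(R)$, which is orthogonal for $\ll{\,,\,}$. Since this form is symmetric and nondegenerate, and we extend it bilinearly, it is enough to prove that
\[
 \ll{T_\id{m}([I]),[J]} = \ll{[I],[\id{m}^{-1}]T_\id{m}([J])}
\]
for arbitrary $I,J\in\mathfrak{I}(R)$; here $[\id{m}^{-1}]$ and $T_\id{m}$ commute because the $\cl(F)$-action commutes with the Hecke operators, so the right-hand side is indeed the pairing of $[I]$ against the image of $[J]$ under $[\id{m}^{-1}]T_\id{m}$.

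First I would rewrite both sides as cardinalities using the first description of the inner product, namely $\ll{[K],[L]}=\#\{x\in\OO^\times\backslash B^\times:Kx=L\}$. On the left, expanding $T_\id{m}([I])=\sum_{K\in t_\id{m}(I)}[K]$ and noting that for each $x$ the ideal $K=Jx^{-1}$ is the unique candidate gives
\[
 \ll{T_\id{m}([I]),[J]} = \#\{x\in\OO^\times\backslash B^\times: Jx^{-1}\in t_\id{m}(I)\}.
\]
On the right, expanding $[\id{m}^{-1}]T_\id{m}([J])=\sum_{L\in t_\id{m}(J)}[\id{m}^{-1}L]$ and using that $L=\id{m}Ix$ is determined by the relation $Ix=\id{m}^{-1}L$ gives
\[
 \ll{[I],[\id{m}^{-1}]T_\id{m}([J])} = \#\{x\in\OO^\times\backslash B^\times: \id{m}Ix\in t_\id{m}(J)\}.
\]
Thus the whole statement reduces to showing that these two counting sets coincide, and the obvious candidate is the identity map $x\mapsto x$ on $\OO^\times\backslash B^\times$.

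The core of the argument, which I expect to be the main obstacle, is therefore the equivalence
\[
 Jx^{-1}\in t_\id{m}(I) \Longleftrightarrow \id{m}Ix\in t_\id{m}(J).
\]
I would obtain it by combining the preceding lemma with the compatibility of $t_\id{m}$ with right translation by $B^\times$: applying the lemma to the pair $Jx^{-1}$ and $I$ yields $Jx^{-1}\in t_\id{m}(I)\Longleftrightarrow \id{m}I\in t_\id{m}(Jx^{-1})$, and then right multiplication by $x$ should transport the containment $\id{m}I\subseteq Jx^{-1}$ of index $\id{m}^2$ to the containment $\id{m}Ix\subseteq J$ of the same index. Justifying this last transport is where the care is needed: one must check that right multiplication by $x\in B^\times$ preserves left orders, so that $Jx^{-1}$ and $\id{m}Ix$ remain locally principal left $R$-ideals lying in $\mathfrak{I}(R)$, and that it induces an $\OO$-linear isomorphism of the relevant finite quotients, so that the index ideal is unchanged. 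Once the equivalence is established, the identity map matches the two sets counted above, proving the displayed adjunction identity and hence that the adjoint of $T_\id{m}$ equals $[\id{m}^{-1}]T_\id{m}$.
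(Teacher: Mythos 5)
Your proposal is correct and takes essentially the same route as the paper: both reduce the adjunction identity on basis vectors to a single count over $x\in\OO^\times\backslash B^\times$, apply the lemma $I\in t_\id{m}(J)\Leftrightarrow \id{m}J\in t_\id{m}(I)$ to a translated ideal, and then use that right multiplication by $x\in B^\times$ preserves membership in $t_\id{m}(\cdot)$ --- a step the paper uses silently in passing from $\id{m}J\in t_\id{m}(Ix)$ to $\id{m}Jx^{-1}\in t_\id{m}(I)$, and which you rightly identify as the point needing justification. The only cosmetic difference is that you prove $\ll{T_\id{m}([I]),[J]}=\ll{[I],[\id{m}^{-1}]T_\id{m}([J])}$ directly, whereas the paper establishes $\ll{[I],T_\id{m}([J])}=\ll{T_\id{m}([I]),[\id{m}][J]}$ and implicitly uses that the $\cl(F)$-action preserves $\ll{\,,\,}$.
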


\begin{proof}
 Let $I,J\in\mathfrak{I}(R)$. Then
\begin{align*}
  \ll{[I],T_\id{m}([J])} & =\sum_{L\in t_\id{m}(J)}\#\{x\in \OO^\times\backslash
B^\times: I x = L\} \\ 
& = \#\{x\in \OO^\times\backslash B^\times: I x \in t_\id{m}(J) \} \\
& = \#\{x\in \OO^\times\backslash B^\times: \id{m} J \in t_\id{m}(Ix) \} \\
& =\#\{x\in \OO^\times\backslash B^\times: \id{m} J x^{-1}\in t_\id{m}(I) \} = \ll{T_\id{m}([I]),[\id{m}][J]},
\end{align*}
where the third equality follows by the previous lemma. This proves the assertion.
\end{proof}

\begin{proposition}
 
 The spaces $E(R)$ and $S(R)$ are preserved by the action of the Hecke operators and by the action of $\cl(F)$.
 
\end{proposition}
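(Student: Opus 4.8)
The plan is to reduce everything to understanding the action on $E(R)$ and then to deduce the statement for $S(R)=E(R)^\perp$ from the adjointness relation of Proposition~\ref{prop:hecke_adj}. The starting observation is a clean description of the generators $e_{[\id{a}]}$ in terms of the pairing: since $\ll{[I'],[I]}$ vanishes unless $[I']=[I]$, one computes directly that
\[
 \ll{e_{[\id{a}]},[I]}=\begin{cases} 1 & \text{if } [N(I)]=[\id{a}], \\ 0 & \text{if } [N(I)]\neq[\id{a}].\end{cases}
\]
Because $\ll{\,,\,}$ is non-degenerate, being diagonal and positive definite on the basis $\cl(R)$, an element of $M(R)$ is determined by its pairings against all classes $[I]$, and this is the tool I would use throughout.

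For the action of $\cl(F)$ I would first record that $N(\id{b} I)=\id{b}^2 N(I)$, which is immediate from the local definition of the action together with the fact that the reduced norm of a central element $\xi\in F$ is $\xi^2$. Hence $[\id{b}]$ sends $\cl_{[\id{a}]}(R)$ bijectively onto $\cl_{[\id{b}]^2[\id{a}]}(R)$; since the action also preserves $\ll{\,,\,}$, it preserves the self-pairings $\ll{[I],[I]}$, and therefore $[\id{b}]\cdot e_{[\id{a}]}=e_{[\id{b}]^2[\id{a}]}$. Thus $\cl(F)$ permutes the spanning set of $E(R)$ and so preserves $E(R)$.

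For the Hecke operators I would show that $T_\id{m}$ sends each generator to a generator up to a scalar, namely $T_\id{m}(e_{[\id{a}]})=n_\id{m}\,e_{[\id{m}][\id{a}]}$ with $n_\id{m}=\# t_\id{m}(J)$. The one genuine technical point, which I expect to be the main obstacle, is that $n_\id{m}$ is \emph{independent of $J$}: writing $t_\id{m}(J)$ as a restricted product of the local sets $\{K_\pp\subseteq J_\pp:[J_\pp:K_\pp]=\id{m}_\pp^2\}$, each local factor depends only on $\id{m}_\pp$ and on the isomorphism type of $J_\pp$ as a left $R_\pp$-module, and the latter is always $R_\pp$ since every ideal in $\mathfrak{I}(R)$ is locally principal. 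Granting this, I would compute, for an arbitrary class $[J]$, using Proposition~\ref{prop:hecke_adj},
\[
 \ll{T_\id{m}(e_{[\id{a}]}),[J]}=\ll{e_{[\id{a}]},[\id{m}^{-1}]T_\id{m}([J])}=\sum_{L\in t_\id{m}(J)}\ll{e_{[\id{a}]},[\id{m}^{-1}L]}.
\]
Since $[N(L)]=[\id{m}][N(J)]$ for $L\in t_\id{m}(J)$, one has $[N(\id{m}^{-1}L)]=[\id{m}]^{-1}[N(J)]$, so each summand equals $1$ when $[N(J)]=[\id{m}][\id{a}]$ and $0$ otherwise; the total is thus $n_\id{m}$ in the first case and $0$ in the second. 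This agrees with $\ll{n_\id{m}\,e_{[\id{m}][\id{a}]},[J]}$ for every $[J]$, so by non-degeneracy the two elements coincide and $T_\id{m}$ preserves $E(R)$.

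Finally, $S(R)=E(R)^\perp$ is handled by the general principle that if the adjoint of an operator preserves a subspace $W$ then the operator preserves $W^\perp$: for $v\in W^\perp$ and $w\in W$ one has $\ll{Av,w}=\ll{v,A^*w}=0$. For $T_\id{m}$ the adjoint is $[\id{m}^{-1}]T_\id{m}$ by Proposition~\ref{prop:hecke_adj}, and it preserves $E(R)$ because both $T_\id{m}$ and the $\cl(F)$-action do; hence $T_\id{m}$ preserves $S(R)$. For the $\cl(F)$-action each $[\id{b}]$ is an isometry, so its adjoint is $[\id{b}^{-1}]$, which preserves $E(R)$; hence $[\id{b}]$ preserves $S(R)$ as well.
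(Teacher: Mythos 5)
Your proof is correct, and it takes the paper's two ingredients but runs the deduction in the opposite direction. The paper first proves invariance of $S(R)$: it reduces to prime $\id{m}$ via Proposition~\ref{prop:hecke-cuat}, shows $\#t_\pp(I)$ is a constant $c$ independent of $I$ by exactly your local-principality argument (a bijection with $R_\pp^\times\backslash\{x_\pp\in R_\pp:\OO_\pp N(x_\pp)=\pp\OO_\pp\}$), computes $\ll{T_\pp(v),e_{[\id{a}]}}=c\,\ll{v,e_{[\pp^{-1}\id{a}]}}$, and only then deduces invariance of $E(R)$ from Proposition~\ref{prop:hecke_adj}; you instead compute the action on the generators, $T_\id{m}(e_{[\id{a}]})=n_\id{m}\,e_{[\id{m}][\id{a}]}$, so you get $E(R)$ first and $S(R)$ afterwards by the standard adjoint/orthocomplement principle. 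The two computations are adjoint to one another (your identity is equivalent to the paper's pairing identity), so the mathematical content is essentially the same, but your organization buys two things: it is uniform in $\id{m}$, avoiding the reduction to primes --- which in the paper is slightly delicate, since relations (2) and (3) of Proposition~\ref{prop:hecke-cuat} are only stated for $\pp\nmid\D$, so powers of ramified primes are not literally covered by that reduction --- and it records the explicit eigenvector-like behaviour of the $e_{[\id{a}]}$ under all of $T_\id{m}$, which is of independent use. Your two supporting points are correctly handled: the independence of $n_\id{m}=\#t_\id{m}(J)$ from $J$ follows, as you say, because right multiplication by a local generator of $J_\pp$ identifies the local factor at $\pp$ with the set of invertible left ideals of $R_\pp$ of index $\id{m}_\pp^2$ (this preserves both the $\OO_\pp$-index and local principality); and the norm computation $[N(L)]=[\id{m}][N(J)]$ for $L\in t_\id{m}(J)$ is the same observation the paper uses in the form $N(J)=\pp N(I)$. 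The $\cl(F)$ part of your argument coincides with the paper's.
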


\begin{proof}

Let $\id{a},\id{b}$ be fractional ideals. Then
\[
 [\id{b}]e_{[\id{a}]}=\sum_{[I]\in\cl_{[\id{a}]}(R)}\frac{1}{\ll{[\id{b}I],[\id{b}I]}} [\id{b}I]=e_{[\id{b}^2\id{a}]},
\]
since multiplication by $[\id{b}]$ gives a bijection between $\cl_{[\id{a}]}(R)$ and $\cl_{[\id{b}^2\id{a}]}(R)$. This shows that $\cl(F)$ preserves $E(R)$, and hence $S(R)$, since the adjoint of $[\id{a}]$ with respecto to $\ll{\,,\,}$ is $[\id{a}^{-1}]$.

To prove that the Hecke operators preserve $S(R)$, by Proposition~\ref{prop:hecke-cuat} it suffices to prove that $T_\id{p}(S(R))\subseteq S(R)$ for every prime ideal $\id{p}$.

Let $\pp$ be a prime ideal. Given $I\in\mathfrak{I}(R)$, the set $t_\id{p}(I)$ is in bijection with the set
\[
  R_\pp^\times \backslash \left\{x_\pp\in R_\pp: \OO_\pp N(x_\pp)= \pp\OO_\pp\right\},
\]
and hence $\#t_\id{p}(I)=c$ does not depend on $I$. We also note that for $J\in t_\id{p}(I)$ and a fractional ideal $\id{a}$ we have that
\[
 \ll{[J],e_{[\id{a}]}}=\begin{cases}
                            1, & [N(I)]=[\id{p}^{-1}\id{a}], \\
                            0, & [N(I)]\neq[\id{p}^{-1}\id{a}],
                           \end{cases}
\]
since $N(J)=\id{p}N(I)$.

Let $v=\sum_{[I]\in\cl(R)} \lambda_{[I]} [I]\in M(R)$, and let $\id{a}$ be a fractional ideal. Then
\[
 \ll{T_\pp(v),e_{[\id{a}]}} = \sum_{[I]\in\cl(R)} \lambda_{[I]} \Bigg(\sum_{J\in t_\pp(I)} \ll{[J],e_{[\id{a}]}}\Bigg)=c\cdot\Bigg(\sum_{[I]\in\cl_{[\id{p}^{-1}\id{a}]}(R)} \lambda_{[I]}\Bigg)=c\cdot\ll{v,e_{[\id{p}^{-1}\id{a}]}},
\]
which proves that $T_\pp(v)$ is cuspidal if (and only if) $v$ is cuspidal.

Finally, these facts together with Proposition~\ref{prop:hecke_adj} imply that $E(R)$ is preserved by the Hecke operators.

\end{proof}

Since the Hecke operators are commuting, normal operators, $S(R)$ has a basis of simultaneous eigenvectors for the whole Hecke algebra. However, since the operators $T_{\pp^k}$ with $\id{p}\mid\D$ do not satisfy the same relations as the Hecke operators on Hilbert modular forms, we will be interested only in the algebra of operators $\mathbb{T}_0$ generated by the $T_\id{p}$ with $\pp\nmid\D$.

\section{Hilbert modular forms of half-integral weight}\label{sec:3}

We follow \cite{shim-hhalfint} closely, though omitting and avoiding many technical details which are not relevant for our purposes.

As in the rational case, half-integral weight Hilbert modular forms are defined in terms of the theta function
\[
 \theta(z)=\sum_{\xi\in\OO} e_F(\xi^2, z/2), \quad z\in\HH^\mathbf{a}.
\]
We let $J(\gamma,z)=\left(\dfrac{\theta(\gamma z)}{\theta(z)}\right)j(\gamma,z)$ for $\gamma\in \SL_2(F)$.

Let $\id{b}\subseteq\OO$ be an ideal divisible by $4$. Let $\psi$ be a Hecke character of $F$ with conductor dividing $\id{b}$, and denote by $\psi^*$ the character on ideals prime to $\id{b}$ induced by $\psi$. For an integral ideal $\id{m}$ we denote $\psi_\id{m}=\prod_{\pp\mid \id{m}}\psi_\pp$. We also denote $\psi_\mathbf{a} = \prod_{\tau\in\mathbf{a}}\psi_\tau$.

Given fractional ideals $\id{r},\id{n}$, let $\Gamma[\id{r},\id{n}]  = \SL_2(F)\cap\tilde\Gamma[\id{r},\id{n}]$. For $\gamma=\big(\begin{smallmatrix} a & b \\ c & d \end{smallmatrix}\big)\in \SL_2(F)$ and $f:\HH^\mathbf{a}\to\C$, let 
\[
(f\vert\gamma)(z)=\psi_\id{b}(a)^{-1} J(\gamma,z)^{-1}f(\gamma z). 
\]
A \emph{Hilbert modular form} of weight $\mathbf{3/2}$, level $\id{b}$ and character $\psi$, is an holomorphic function $f$ on $\HH^\mathbf{a}$ satisfying
\[
 f\vert\gamma= f \quad \forall\,\gamma\in \Gamma[2^{-1}\id{d},\id{b}].
\]
The space of such $f$ is denoted by $M_\mathbf{3/2}(\id{b},\psi)$. It is trivial unless $\psi_\mathbf{a}(-1)=(-1)^d$.

\medskip

Given $f\in M_\mathbf{3/2}(\id{b},\psi)$, there is a Fourier series attached to each ideal class in $F$. More precisely, for every $\xi\in F$ and every fractional ideal $\id{m}$ there is a complex number $\lambda(\xi,\id{m},f)$ such that
\[
 f(z)=\sum_{\xi\in F}\lambda(\xi,\OO,f)e_F(\xi,z/2),
\]
and such that
\begin{align*}
  \lambda(\xi b^2,\id{m},f) &= N_{F/\Q}(b)\psi_\mathbf{a}(b)\lambda(\xi ,b\id{m},f)
\quad \forall\, b\in F^\times,\\
 \lambda(\xi,\id{m},f) &=0, \quad \text{unless }\xi\in(\id{m}^{-2})^+\cup\{0\}.
\end{align*}
See \cite[Proposition 3.1]{shim-hhalfint}. The Fourier coefficients $\lambda(\xi,\id{m},f)$ for non-princi\-pal $\id{m}$ are harder to describe, but this can be done explicitly in the case of forms given by theta series, which we will consider below.

We say that $f$ is a \emph{cusp form} if $\lambda(0,\id{m},f\vert\gamma)=0$ for every fractional ideal $\id{m}$, for every $\gamma\in \SL_2(F)$. The space of such $f$ is denoted by $S_\mathbf{3/2}(\id{b},\psi)$.

\begin{definition}
 
The \emph{Kohnen plus space} $M^+_\mathbf{3/2}(\id{b},\psi)$ is the subspace of those $f\in M_\mathbf{3/2}(\id{b},\psi)$ such that $\lambda(\xi,\OO,f)=0$ for every $\xi\in\OO^+$ such that $-\xi$ is not a square modulo $4\OO$. We denote $S^+_\mathbf{3/2}(\id{b},\psi)=M^+_\mathbf{3/2}(\id{b},\psi)\cap S_\mathbf{3/2}(\id{b},\psi)$.
 
\end{definition}

In \cite{shim-hhalfint} there are defined Hecke operators for square-free ideals $\id{m}$. Due to normalization issues, in this article we denote by $T_\id{m}$ the $\id{m}$-th Hecke operator of \cite{shim-hhalfint} multiplied by $N(\id{m})$. We recall the action of the Hecke operators in terms of Fourier coefficients (see \cite[Proposition 5.4]{shim-hhalfint}).

\begin{proposition}\label{prop:hecke-coef-me}
 
Let $f\in M_\mathbf{3/2}(\id{b},\psi)$, and let $\pp\nmid\id{b}$. Let $\id{m}$ be a fractional ideal, and take $c_\id{p}\in F_\id{p}$ such that 
$c_\id{p}\OO_\id{p}=\id{m}_\id{p}$. Then,
\[
 \lambda(\xi,\id{m},T_{\pp}(f)) = N(\id{p})\lambda(\xi,\pp\id{m},f) + \psi^*(\pp)\big(\tfrac{\xi c_\id{p}^2}{\pp} \big)\lambda(\xi,\id{m},f) + \psi^*(\pp^2)\lambda(\xi,\pp^{-1}\id{m},f),
\]
where $\big(\frac{*}{\pp}\big)$ denotes the quadratic residue symbol modulo
$\pp$.
\end{proposition}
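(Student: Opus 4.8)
The plan is to deduce this directly from \cite[Proposition 5.4]{shim-hhalfint}, the only new ingredient being the normalization of the Hecke operators fixed just above the statement. Write $T'_\pp$ for the $\pp$-th Hecke operator as defined by Shimura, so that in our convention $T_\pp = N(\pp)\,T'_\pp$. Thus it suffices to record Shimura's formula for the action of $T'_\pp$ on Fourier coefficients and multiply it by $N(\pp)$.

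First I would transcribe Shimura's three-term expression for $\lambda(\xi,\id{m},T'_\pp(f))$ as a combination of $\lambda(\xi,\pp\id{m},f)$, $\lambda(\xi,\id{m},f)$ and $\lambda(\xi,\pp^{-1}\id{m},f)$. Matching against the claimed identity forces the leading coefficient there to be $1$, while the coefficients of the last two terms must be $N(\pp)^{-1}\psi^*(\pp)\big(\tfrac{\xi c_\pp^2}{\pp}\big)$ and $N(\pp)^{-1}\psi^*(\pp^2)$ respectively. Multiplying through by $N(\pp)$ then turns the leading coefficient into $N(\pp)$ and cancels the two factors of $N(\pp)^{-1}$, producing exactly the stated formula. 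This reduction is essentially bookkeeping once Shimura's formula is in hand.

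The genuine work, and the main place where care is required, is in reconciling conventions. I would need to check that the ideal character $\psi^*$ used here coincides with the one attached to $\psi$ in \cite{shim-hhalfint}, that the quadratic residue symbol is normalized the same way, and—most delicately—that the local generator $c_\pp$ of $\id{m}_\pp$ enters the symbol precisely as $\xi c_\pp^2$. This factor is what makes the middle term behave correctly when $\pp\mid\id{m}$, where $\big(\tfrac{\xi c_\pp^2}{\pp}\big)$ vanishes because $c_\pp$ is not a unit at $\pp$. Along the way I would verify that both sides are well defined: the symbol is independent of the choice of $c_\pp$, since two choices differ by a unit $u\in\OO_\pp^\times$ and $\big(\tfrac{u^2}{\pp}\big)=1$, and both sides are compatible with the scaling relation $\lambda(\xi b^2,\id{m},f)=N_{F/\Q}(b)\psi_\mathbf{a}(b)\lambda(\xi,b\id{m},f)$. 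Once these identifications are pinned down, the formula is immediate from Shimura's and the chosen normalization.
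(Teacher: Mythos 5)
Your proposal is correct and takes essentially the same route as the paper, which offers no independent argument: the statement is simply a transcription of \cite[Proposition 5.4]{shim-hhalfint} under the normalization $T_\pp=N(\pp)T'_\pp$ fixed just before it, exactly the reduction you describe. Your extra verifications (independence of the choice of $c_\pp$, vanishing of $\big(\tfrac{\xi c_\pp^2}{\pp}\big)$ when $\pp\mid\id{m}$) are sound bookkeeping beyond what the paper records.
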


For $\id{n}\subseteq\OO$ we introduce a formal symbol $M(\id{n})$, satisfying that $M(\id{n}\id{m})=M(\id{n})M(\id{m})$ for all $\id{n},\id{m}\subseteq\OO$. Then we can consider the ring of formal series in these symbols, indexed by integral ideals. The following result, which is essentially \cite[Theorems 6.1 and 6.2]{shim-hhalfint}, is the generalization of the Shimura correspondence for Hilbert modular forms. We assume for simplicity that $\psi$ is a quadratic character, since this will be the case in our setting.

\begin{theorem}\label{thm:shimura-map}

For each $\xi\in\OO^+$ there is a Hecke linear map $\Shim_\xi:M_\mathbf{3/2}(\id{b},\psi) \to M_\mathbf{2}(\id{b}/2)$, characterized by the following property. Write $\xi\OO=\qq^2\id{r}$ with $\qq,\id{r}$ integral and $\id{r}$ square-free, and let $\epsilon_\xi$ be the Hecke character corresponding to $F(\sqrt{\xi})/F$. Let $f\in M_\mathbf{3/2}(\id{b},\psi)$. Then (formally), 
\begin{align*}
& \sum_{\id{m}\subseteq\OO} c(\id{m} ,\Shim_\xi(f))M(\id{m}) \\
& =\left(\sum_{\id{m}\subseteq\OO}\lambda(\xi,\qq^{-1}\id{m},f)M(\id{m}
)\right)\left(\sum_{\id{m}\subseteq\OO}(\psi\epsilon_\xi)^*(\id{m})N(\id{m})^{
-1}M(\id{m}) \right).
\end{align*}

\end{theorem}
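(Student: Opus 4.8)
The plan is to take from \cite[Theorem 6.1]{shim-hhalfint} the existence of a Hecke-linear lift $M_\mathbf{3/2}(\id{b},\psi)\to M_\mathbf{2}(\id{b}/2)$ together with the genuinely analytic input — that its output is a holomorphic form whose Fourier coefficients are indexed by integral ideals as in Section~\ref{sec:1} — which I would not attempt to reprove. What then remains is to identify these Fourier coefficients with the right-hand side of the stated identity, which is \cite[Theorem 6.2]{shim-hhalfint} once the normalization is reconciled: here $T_\id{m}$ denotes Shimura's operator scaled by $N(\id{m})$, and this rescaling is precisely what inserts the weights $N(\id{m})^{-1}$ into the Dirichlet series $\sum_{\id{m}}(\psi\epsilon_\xi)^*(\id{m})N(\id{m})^{-1}M(\id{m})$. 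Tracking that factor through the argument is the bookkeeping to be carried out.

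Because $(\psi\epsilon_\xi)^*$ is multiplicative, the second factor is a formal Euler product $\prod_\pp\big(1-(\psi\epsilon_\xi)^*(\pp)N(\pp)^{-1}M(\pp)\big)^{-1}$, so the whole identity localizes and it suffices to compare, prime by prime at $\pp\nmid\id{b}$, the $\pp$-part of the integral-weight Hecke recursion for $\{c(\id{m},\Shim_\xi(f))\}$ (the weight-$\mathbf{2}$ analogue of \cite[(2.12)]{shim-special}) against the recursion obtained by feeding Proposition~\ref{prop:hecke-coef-me} into the right-hand side. First I would treat $\xi$ square-free, so that $\qq=\OO$ and $\id{r}=\xi\OO$; here the shift $\qq^{-1}\id{m}$ disappears and the identity is Shimura's verbatim. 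For general $\xi$ I would then use $\xi\OO=\qq^2\id{r}$: the square part $\qq^2$ is absorbed precisely by replacing $\id{m}$ with $\qq^{-1}\id{m}$ in the argument of $\lambda$, which is legitimate by the homogeneity relation $\lambda(\xi b^2,\id{m},f)=N_{F/\Q}(b)\psi_\mathbf{a}(b)\lambda(\xi,b\id{m},f)$ recorded in Section~\ref{sec:3}. The Hecke-equivariance is then checked on generating functions: Proposition~\ref{prop:hecke-coef-me} gives the three-term recursion satisfied by $\lambda(\xi,\qq^{-1}\id{m},T_\pp f)$, and one verifies that after multiplication by the Euler product it collapses into the two-term weight-$\mathbf{2}$ recursion for $c(\id{m},T_\pp\Shim_\xi(f))$, whence $\Shim_\xi(T_\pp f)=T_\pp\Shim_\xi(f)$.

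The step I expect to be the main obstacle is matching the middle term of Proposition~\ref{prop:hecke-coef-me}, namely $\psi^*(\pp)\big(\tfrac{\xi c_\pp^2}{\pp}\big)\lambda(\xi,\id{m},f)$, with the Euler factor of the twisted character. One must identify the quadratic-residue symbol $\big(\tfrac{\xi c_\pp^2}{\pp}\big)$ with the value $\epsilon_\xi^*(\pp)$ of the character attached to $F(\sqrt\xi)/F$, so that $\psi^*(\pp)\big(\tfrac{\xi c_\pp^2}{\pp}\big)=(\psi\epsilon_\xi)^*(\pp)$; this is transparent when $c_\pp$ is a unit, but requires care when $\pp\mid\qq$ or $\pp\mid\id{r}$, that is exactly where the square/square-free decomposition of $\xi\OO$ interacts with the local representative $c_\pp$ chosen for $\id{m}_\pp$. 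Controlling this dependence on $c_\pp$ uniformly in $\id{m}$, together with the ramified primes dividing $\id{r}$ — where $\epsilon_\xi^*(\pp)=0$ and the middle term must drop out to leave a degenerate Euler factor — is where the normalization and the generating-function algebra have to be handled most carefully; everything else reduces to formal manipulation of the two power series and an appeal to Shimura for the underlying modularity.
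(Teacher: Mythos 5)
Your proposal coincides with the paper's treatment: the paper gives no proof of Theorem~\ref{thm:shimura-map} at all, stating only that it is ``essentially \cite[Theorems 6.1 and 6.2]{shim-hhalfint}'' under the rescaling $T_\id{m}\mapsto N(\id{m})T_\id{m}$ announced before Proposition~\ref{prop:hecke-coef-me}, which is exactly the citation-plus-normalization route you take. Your additional verification sketch is sound and correctly isolates the one delicate point, namely that for $\pp\nmid\id{r}\id{m}$ the symbol $\big(\tfrac{\xi c_\pp^2}{\pp}\big)$ depends only on the square class of $\xi$ and hence equals $\epsilon_\xi^*(\pp)$ (making the square-free reduction via the homogeneity relation, which anyway would need $\qq$ narrowly principal, dispensable), while for $\pp\mid\id{r}$ it vanishes and the Euler factor degenerates as you say.
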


\section{Ternary theta series}\label{sec:4}

Theta series of totally definite ternary quadratic forms can be used to construct Hilbert modular forms of weight $\mathbf{3/2}$, as we show in Proposition~\ref{prop:theta-tern} below. We start this section by recalling some results from \cite{shim-hhalfint} that we need to prove this Proposition.

We denote by $F_\mathbb{A}$ the ring of adeles of $F$, and we denote by $F_\mathbf{f}$ the non-archimedean part of $F_\A$. Given a fractional ideal $\id{n}$, we denote
\[
 \Gamma_\A[\id{n}]=\Big\{\big(\begin{smallmatrix} a & b \\ c & d       
\end{smallmatrix}\big)\in \SL_2(F_\A):a_\pp,d_\pp\in \OO_\pp, b_\pp\in (2\id{d}^{-1})_\pp, c_\pp\in (2\id{n}\id{d})_\pp \quad \forall\,\pp \Big\},
\]
Given a $2\times2$ matrix $\beta$, we use the notation $\beta=\big(\begin{smallmatrix} a_\beta & b_\beta \\ c_\beta & d_\beta       
\end{smallmatrix}\big)$ to refer to the coefficients of $\beta$. For $\beta\in \SL_2(F)$, we denote by $\id{a}_\beta$ the fractional ideal given locally by $(\id{a}_\beta)_\pp=(c_\beta)\id{d}^{-1}_\pp + d_\beta \OO_\pp$.

\medskip

We fix a  totally negative definite matrix a $S\in M_3(F)$. Denote by $\psi$ the Hecke character corresponding to the quadratic extension $F(\sqrt{\det S})/F$, and denote by $\id{f}$ its conductor.

Let $\mathcal{S}(F_\mathbf{f}^3)$ denote the Schwartz-Bruhat space of locally constant functions on $F_\mathbf{f}^3$. Given $\eta\in \mathcal{S}(F_\mathbf{f}^3)$, we consider the theta series given by
\[
 g(z;\eta)=\sum_{\xi\in F^3} \eta(\xi) e_F(\xi S\xi^t,\tfrac{\overline{z}}{2}).
\]

In \cite[Proposition 2.4]{shim-hhalfint} there is defined an action of $\SL_2(F)$ on $\mathcal{S}(F_\mathbf{f}^3)$, which is denoted by $(\beta,\eta)\mapsto {^{\beta}\eta}$. In terms of this action we have the following transformation formula for $g(z;\eta)$.

\begin{proposition}\label{prop:transf-theta}
 Denote by $P(F_\A)$ the subgroup of $\SL_2(F_\A)$ of upper triangular matrices. For every $\beta\in \SL_2(F)\cap P(F_\A)\Gamma_\A[\OO]$,
\[
 g(\beta z;{^{\beta}\eta})=\overline{J(\beta,z)} g(z;\eta).
\]
\end{proposition}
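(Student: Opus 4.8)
The plan is to read the identity as a special case of the general theory of theta functions attached to quadratic forms, with the action $\eta\mapsto{^\beta\eta}$ of \cite[Proposition 2.4]{shim-hhalfint} playing the role of the finite part of the associated Weil representation. The conceptual engine is the automorphy of the global theta kernel under $\SL_2(F)$, which is exactly global Poisson summation applied to the lattice $F^3$ sitting inside $(F_\A)^3$ with respect to the pairing induced by $S$. I would realize $g(z;\eta)$ as the specialization of this kernel in which the archimedean component is the point of $\HH^\mathbf{a}$ determined by $z$ and the finite component is $\eta$; the left $\SL_2(F)$-invariance of the kernel then becomes the asserted transformation, once the effect of $\beta$ at the archimedean places is separated from its effect at the finite places. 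On the finite side this separated contribution is by definition the operator $\eta\mapsto{^\beta\eta}$, and on the archimedean side it is the scalar $\overline{J(\beta,z)}$.

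To make this concrete and to control the constants, I would verify the two sides on generators, using that $\eta\mapsto{^\beta\eta}$ is a genuine action and that $J$ satisfies the cocycle relation $J(\beta_1\beta_2,z)=J(\beta_1,\beta_2z)\,J(\beta_2,z)$, so that the validity of the formula is stable under products. For $\beta$ upper triangular the computation is elementary: the diagonal entry rescales the argument $\xi S\xi^t$ and the unipotent part translates it by a lattice shift, and in each case the definition of ${^\beta\eta}$ in \cite{shim-hhalfint} is precisely the substitution or multiplication by a root of unity needed to reproduce $\overline{J(\beta,z)}$. The point of the hypothesis $\beta\in\SL_2(F)\cap P(F_\A)\Gamma_\A[\OO]$ is exactly that $\beta$ is of this integral parabolic type at \emph{every} finite place, so that no genuine local inversion at a finite prime intervenes beyond what \cite[Proposition 2.4]{shim-hhalfint} already packages; this is what pins down ${^\beta\eta}$ in its explicit form and lets me match the finite side without isolated Gauss sums appearing by hand.

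The main obstacle is the archimedean computation, namely checking that the product over $\tau\in\mathbf{a}$ of the archimedean Weil action on the Gaussian $e_F(\xi S\xi^t,\overline{z}/2)$ returns \emph{exactly} $\overline{J(\beta,z)}$ and not that factor times a spurious eighth root of unity. Three features must be reconciled at once: the cube coming from the ternary ($3$-variable) form, which promotes the weight-$1/2$ factor carried by $\theta$ to the weight-$3/2$ factor defining $J$; the complex conjugation and the appearance of $\overline z$, both forced by $S$ being totally negative definite and responsible for $\overline{J(\beta,z)}$ rather than $J(\beta,z)$; and the quadratic character $\psi$ attached to $F(\sqrt{\det S})/F$, which is the global shadow of the discriminant of $S$ and encodes precisely the Gauss-sum correction separating $J$ from a naive cube of the elementary theta multiplier. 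Tracking these normalizations to land on $\overline{J(\beta,z)}$ on the nose is the delicate step, and it is where the totally negative definiteness of $S$, the level hypothesis on $\beta$, and the specific normalization of $\theta$ built into the definition of $J$ are all consumed.
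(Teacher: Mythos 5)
Your outline reconstructs, in broad strokes, the Weil-representation argument that underlies the result, but it has a genuine gap precisely at the step you yourself flag as ``the delicate step'': you never actually show that the archimedean multiplier produced by the theta-kernel machinery is \emph{exactly} $\overline{J(\beta,z)}$, with $J$ normalized through the quotient $\theta(\gamma z)/\theta(z)$, rather than that quantity times a $\beta$-dependent eighth root of unity. Naming the obstacle is not the same as overcoming it: the splitting of the metaplectic cocycle against this particular normalization of $\theta$, with the discriminant character $\psi$ absorbed correctly, is the entire nontrivial content of the proposition, and all the framework you describe (kernel invariance via Poisson summation, factorization into archimedean and finite parts, cocycle stability under products) is standard scaffolding around it. Without that identification your argument yields the transformation law only up to an unspecified root of unity. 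The paper, by contrast, does no computation at all: the statement \emph{is} \cite[Proposition 11.4]{shim-hhalfint}, and the only verification needed is that for totally negative definite $S$ the automorphy factor there specializes to $J_S(\beta,z)=h(\beta,z)\,\vert j(\beta,z)\vert^3 j(\beta,z)^{-3}$ and satisfies $\overline{J_S}=J$, which follows from the identity $j^2=h^4$ of \cite[(2.19b)]{shim-hhalfint} --- exactly the conjugation issue you correctly attribute to $S$ being totally negative definite, but likewise leave unchecked.

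There is also a concrete misreading of the hypothesis. Membership in $\SL_2(F)\cap P(F_\A)\Gamma_\A[\OO]$ does \emph{not} mean that $\beta$ is ``of integral parabolic type at every finite place'': the group $\Gamma_\A[\OO]$ allows $c_\pp\in(2\id{d})_\pp$ at every finite $\pp$, so the set in question contains, for instance, rational lower-triangular unipotents with $c_\beta\in 2\id{d}$, $c_\beta\neq 0$. For such $\beta$ the transformation is the genuinely hard $c\neq 0$ case, in which Poisson summation produces exactly the local Gauss sums you claim the hypothesis excludes; those sums are what the character $\psi$ and the action of \cite[Proposition 2.4]{shim-hhalfint} encode, not what they avoid. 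Relatedly, your generators-plus-cocycle scheme requires decomposing an arbitrary element of $\SL_2(F)\cap P(F_\A)\Gamma_\A[\OO]$ into factors that remain inside the set where the formula is being proved, and this set is a double-coset-type condition, not a group, so that reduction is not automatic and is nowhere justified. In short: your strategy is a plausible sketch of how Shimura proves his Proposition 11.4, but as written it neither completes the multiplier identification nor correctly handles the finite places; the economical (and the paper's) route is to quote \cite[Proposition 11.4]{shim-hhalfint} and check only the relation $\overline{J_S}=J$.
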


\begin{proof}
 This is \cite[Proposition 11.4]{shim-hhalfint}. Note that since $S$ is totally negative definite, the automorphy factor $J_S$ involved in that result is given by
 \[
  J_S(\beta,z)=h(\beta,z)\cdot \vert j(\beta,z) \vert ^3 j(\beta,z)^{-3}.
 \]
 It satisfies that $\overline{J_S}=J$, since by \cite[(2.19b)]{shim-hhalfint} we have that $j^2=h^4$.
\end{proof}

The following two results, which are respectively Propositions 11.7 and 11.5 from \cite{shim-hhalfint}, show how $\SL_2(F)$ acts on $\mathcal{S}(F_\mathbf{f}^3)$ in certain cases.

\begin{proposition}\label{prop:eta}
 Given $\eta\in\mathcal{S}(F_\mathbf{f}^3)$, let $M$ be an $\OO$-lattice in $F^3$ such that $\eta(x+u)=\eta(x)$ for every $u\in M$. Furthermore, let $\id{r},\id{n},\id{z}$ be fractional ideals of $F$ satisfying:
 \begin{enumerate}
  \item $x S x^t\in\id{r}\,$ for every $x\in F^3$ such that $\eta(x)\neq0$.
  \item $x S x^t\in\id{n}\,$ for every $x\in F^3$ such that $\Tr(xSy^t)\in\id{d}^{-1}$ for every $y\in M$.
  \item $\eta(xa)=\eta(x)$ for every $a\in \hat\OO^\times$ such that $a_\pp-1\in \id{z}_\pp\,$ for every $\pp$.
 \end{enumerate}
 Let $\id{a}=\id{r}^{-1}\cap\OO$ and $\id{b}=4\D\cap\id{z}\cap 4\id{a}\cap 4\id{d}^{-1}\id{a}\id{n}^{-1}$. Then
 \[
  {^{\beta}\eta}(x)=\psi_\id{f}(d_\beta) \eta(x(a_\beta)_\id{z}) \qquad\forall\, \beta\in \Gamma[2^{-1}\id{d}\id{a}^{-1},\id{b}],
 \]
 where $(a_\beta)_\id{z}$ denotes the projection of $a_\beta$ to $\prod_{\pp\mid\id{z}}F_\pp^\times$.
\end{proposition}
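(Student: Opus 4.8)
The plan is to deduce this as the totally negative definite ternary specialization of Proposition 11.7 of \cite{shim-hhalfint}, so that the real content is the bookkeeping that matches Shimura's general data to the ideals $\id{a}$, $\id{b}$ and the character $\psi_\id{f}$ as stated here. As in the proof of Proposition~\ref{prop:transf-theta}, the negative definite signature only replaces the automorphy factor $J_S$ by its conjugate and does \emph{not} alter the Weil-representation action $(\beta,\eta)\mapsto{^{\beta}\eta}$ on $\mathcal{S}(F_\mathbf{f}^3)$. So I would first record that the action is literally the same in both references, and then reduce the verification to a place-by-place computation on a convenient set of generators of $\Gamma[2^{-1}\id{d}\id{a}^{-1},\id{b}]$.

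Locally at each $\pp$, an element of $\Gamma[2^{-1}\id{d}\id{a}^{-1},\id{b}]$ factors through upper unipotent and torus generators up to a bounded correction, and I would track each contribution to ${^{\beta}\eta}$ separately. The unipotent part $\big(\begin{smallmatrix}1&b_\beta\\0&1\end{smallmatrix}\big)$ multiplies $\eta(x)$ by an $e_F$-factor built from $xSx^t$; hypothesis (1), which forces $xSx^t\in\id{r}$ on the support of $\eta$, together with the integrality $b_\beta\in 2\id{d}^{-1}\id{a}$ imposed by the level, makes this factor trivial, and this is exactly why $\id{a}=\id{r}^{-1}\cap\OO$ controls the first entry $2^{-1}\id{d}\id{a}^{-1}$. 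The torus part produces the scaling $\eta(x(a_\beta)_\id{z})$ and the character value $\psi_\id{f}(d_\beta)$, while hypothesis (3), the invariance of $\eta$ under $\hat\OO^\times$-units congruent to $1$ modulo $\id{z}$, is what permits replacing $a_\beta$ by its projection $(a_\beta)_\id{z}$ and forces $\id{z}\mid\id{b}$.

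The heart of the argument is then the assembly of $\id{b}=4\D\cap\id{z}\cap4\id{a}\cap4\id{d}^{-1}\id{a}\id{n}^{-1}$ from the three hypotheses. Condition (2), phrased through the dual lattice $M^\vee=\{x\in F^3:\Tr(xSy^t)\in\id{d}^{-1}\text{ for all }y\in M\}$, controls the Fourier-theoretic (lower-triangular) direction and is responsible for the factor $4\id{d}^{-1}\id{a}\id{n}^{-1}$, since $\id{n}$ enters $\id{b}$ only there. The remaining factors $4\D$ and $4\id{a}$ come from the normalizations intrinsic to weight $\mathbf{3/2}$, namely the $\theta$-multiplier and the Weil index, whose support sits at the primes dividing $2\D$.

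I expect the genuinely delicate point to be the dyadic analysis. At $\pp\mid2$, and at the ramification of $S$ (the primes dividing $\D$ and the conductor $\id{f}$ of $\psi$), the local Weil representation formulas carry Weil indices and the $\theta$-multiplier is most intricate, and it is precisely there that the factor $4$ and the exact character $\psi_\id{f}$ must be extracted. Away from $2\D\id{f}$ the identity collapses to an elementary quadratic Gauss-sum computation, so I would organize the write-up to dispatch the generic primes in one line and concentrate all the care on $\pp\mid2\D\id{f}$, confirming that Shimura's local constants degenerate under hypotheses (1)--(3) to the clean global formula claimed.
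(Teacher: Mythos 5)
Your first move --- identifying the statement as the totally negative definite ternary specialization of Proposition 11.7 of \cite{shim-hhalfint}, with the signature affecting only the archimedean automorphy factor and not the action of $\SL_2(F)$ on $\mathcal{S}(F_\mathbf{f}^3)$ --- is exactly what the paper does: it presents this proposition as a direct quotation of Shimura's Proposition 11.7 and gives no proof of its own. Your subsequent outline (generators of $\Gamma[2^{-1}\id{d}\id{a}^{-1},\id{b}]$, the role of each hypothesis, the delicate dyadic Weil-index analysis) is a plausible sketch of Shimura's internal argument, but since it is never actually executed it adds nothing beyond the citation, which is all the paper relies on; so your proposal is correct and takes essentially the same approach.
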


\begin{proposition}\label{prop:transf-eta}
 Given $\eta\in\mathcal{S}(F_\mathbf{f}^3)$, there is an open subgroup $U$ of $\Gamma_\A[\id{f}]$ such that if $\beta\in \SL_2(F)\cap\left(\begin{smallmatrix} t & 0\\ 0 & t^{-1}\end{smallmatrix}\right) U$ with $t\in F_\mathbf{f}^\times$, then
 \[
  {^{\beta}\eta}(x)=\psi_\mathbf{a} (d_\beta)\psi^*(d_\beta \id{a}_\beta^{-1})N(\id{a}_\beta)^{3/2} \eta(xt) \qquad \forall\,x\in F_\mathbf{f}^3.
 \]
\end{proposition}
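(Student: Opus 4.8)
The plan is to reduce the identity to a local, place-by-place statement and then reassemble. The action $(\beta,\eta)\mapsto{^{\beta}\eta}$ of \cite[Proposition~2.4]{shim-hhalfint} is built as a restricted tensor product of the local Weil representations $\omega_\pp$ attached to the ternary form $S$, so it suffices to understand how each $\omega_\pp$ acts on the local component $\eta_\pp$. The crucial simplification is that $\beta$ lies in the translate $\diag(t,t^{-1})U$ of the torus by the compact-type group $U$, hence (up to the chosen $U$) in the Borel subgroup; on the Borel the Weil representation involves \emph{no} Fourier transform, and each $\omega_\pp$ acts simply by a scaling of the argument twisted by a value of the quadratic character $\psi_\pp$.

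First I would fix $U$. Since $\eta\in\mathcal{S}(F_\mathbf{f}^3)$ is locally constant with compact support, I can take $U$ to be an open subgroup of $\Gamma_\A[\id{f}]$ small enough that, for every $u\in U$, the local action $\omega_\pp(u_\pp)$ on $\eta_\pp$ is an explicit scaling by a character-and-norm factor. I would also use that $U\subseteq\Gamma_\A[\id{f}]$ forces the lower-left entry $c_\beta$ to be divisible by $\id{f}$, so that at the primes dividing $\id{f}$ the entry $d_\beta$ generates the local component of $\id{a}_\beta$ and the local component of $d_\beta\id{a}_\beta^{-1}$ is trivial; hence $d_\beta\id{a}_\beta^{-1}$ is prime to $\id{f}$, exactly what is needed for $\psi^*(d_\beta\id{a}_\beta^{-1})$ to be defined.

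Next I would apply the torus formula in each $\omega_\pp$. For a nondegenerate quadratic form of rank $3$ with discriminant character $\psi_\pp$,
\[
 \omega_\pp\big(\diag(t_\pp,t_\pp^{-1})\big)\phi(x)=\psi_\pp(t_\pp)\,\vert t_\pp\vert_\pp^{3/2}\,\phi(t_\pp x),
\]
and I would combine this with the scaling produced by $u_\pp$ and take the product over all $\pp$. The argument is scaled to $\eta(xt)$; the accumulated norm and $u$-factors assemble into $N(\id{a}_\beta)^{3/2}$; and the accumulated quadratic-character values assemble into a product $\prod_\pp\psi_\pp(\cdot)$ which, invoking the triviality of the Hecke character $\psi$ on $F^\times\hookrightarrow F_\A^\times$ (so that $\psi_\mathbf{a}(d_\beta)\prod_\pp\psi_\pp(d_\beta)=1$) together with $d_\beta=t^{-1}$ modulo $U$, rewrites as $\psi_\mathbf{a}(d_\beta)\,\psi^*(d_\beta\id{a}_\beta^{-1})$.

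The hard part will be the precise bookkeeping in this last assembly. Two points deserve care: first, matching Shimura's normalization of the action in \cite[Proposition~2.4]{shim-hhalfint} with the Weil-index formula above, so that no spurious automorphy factor survives; and second, the delicate division of labour between the torus contribution and the $u$-contribution to both $N(\id{a}_\beta)^{3/2}$ and the character, precisely at the primes dividing $\id{f}$ where $\psi^*$ is not defined and each $\psi_\pp$ must be tracked directly. Reconstituting these scattered local factors into the globally meaningful pair $\psi_\mathbf{a}(d_\beta)$ and $\psi^*(d_\beta\id{a}_\beta^{-1})$, via the product formula and the idele-class triviality of $\psi$, is where the real content of the proposition lies.
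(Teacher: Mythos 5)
The first thing to note is that the paper contains no proof of this statement to compare yours against: the proposition is imported verbatim from Shimura, as the text immediately above it says (``respectively Propositions 11.7 and 11.5 from \cite{shim-hhalfint}''), so the paper's entire ``proof'' is the citation \cite[Proposition 11.5]{shim-hhalfint}. Your proposal is therefore an attempt to reprove Shimura's result, and your roadmap is the right one and is in the spirit of Shimura's own argument: the action of \cite[Proposition 2.4]{shim-hhalfint} comes from the metaplectic Weil representation, elements of the diagonal torus act without a Fourier transform, smoothness of the representation yields an open $U\subseteq\Gamma_\A[\id{f}]$ acting in a controlled way, and the identity $\psi_\mathbf{a}(d_\beta)\prod_\pp\psi_\pp(d_\beta)=1$ (globality of $d_\beta$ and triviality of $\psi$ on $F^\times$) reassembles the local characters into $\psi_\mathbf{a}(d_\beta)\psi^*(d_\beta\id{a}_\beta^{-1})$; your observations that $d_\beta\id{a}_\beta^{-1}$ is prime to $\id{f}$ and that $N(\id{a}_\beta)^{3/2}=\prod_\pp\vert t_\pp\vert_\pp^{3/2}$ are also correct.

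As a proof, however, there is a genuine gap, and it sits exactly where you yourself park ``the hard part.'' Since $\dim V=3$ is odd, $\omega_\pp$ is a \emph{genuine} representation of the metaplectic double cover, and your stated torus formula is wrong as written: the correct local formula is
\[
 \omega_\pp\big(\diag(t_\pp,t_\pp^{-1})\big)\phi(x)=\gamma_\pp(t_\pp)\,\psi_\pp(t_\pp)\,\vert t_\pp\vert_\pp^{3/2}\,\phi(t_\pp x),
\]
where $\gamma_\pp(t_\pp)$ is a Weil-index ratio, an eighth root of unity, which does \emph{not} cancel place by place. Killing these roots of unity globally requires the product formula for Weil indices together with the splitting of the cover over $\SL_2(F)$ --- which is what defines ${}^{\beta}\eta$ in the first place --- and your sketch never invokes either. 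The same issue infects your treatment of $U$: by smoothness you can indeed shrink $U$ so that the genuine operators fix $\eta$, but only relative to a chosen local section of the cover over $U$, and reconciling that section with the global splitting over $\SL_2(F)$ is precisely the cocycle computation you defer. In short: your proposal is a correct plan whose decisive step (the metaplectic bookkeeping, which is the whole content of the proposition) is not carried out; the paper's route, which is simply to quote \cite[Proposition 11.5]{shim-hhalfint} where Shimura performs exactly this computation, is the honest alternative unless you are prepared to execute the $\gamma$-factor accounting in full.
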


We now apply these results to our setting. Let $B$ be a totally definite quaternion algebra over $F$. For $x\in B$ denote $\Delta(x)=\Tr(x)^2-4N(x)$, the \emph{discriminant} of $x$. Let $V=B/F$, and for $x\in B$ denote by $[x]$ its class in $V$. Then $\Delta$ determines an integral, totally negative definite quadratic form on $V$. For $I\in \mathfrak{I}(R)$, we consider $R_r(I)/\OO$ as a lattice in $V$, which we denote by $L_I$.

From here on, let $\psi$ be the Hecke character corresponding to the extension $F(\sqrt{-1})/F$. This quadratic character has conductor $\id{f}$ dividing $4\OO$, and the corresponding ideal character satisfies $\psi^*(\pp)=(\tfrac{-1}{\pp})$ for $\pp\nmid2$. By local class field theory, $\psi$ satisfies the equality $\psi_\mathbf{a}(-1)=(-1)^d$. Hence, the space $M_\mathbf{3/2}(4\D,\psi)$ is not trivially zero.

\begin{proposition}\label{prop:theta-tern}
 
Given $I\in \mathfrak{I}(R)$, let
\[
 \vartheta_I(z)=\sum_{x\in L_I} e_F\big(-\Delta(x),\tfrac{z}{2}\big).
\]
Then $\vartheta_I\in M^+_\mathbf{3/2}(4\D,\psi)$. Furthermore, the Fourier coefficients of $\vartheta_I$ are given by
\[
\lambda(\xi,\id{a},\vartheta_I)=N(\id{a})^{-1}\cdot\#\{[x]\in\id{a}^{-1}L_I:-\Delta(x)=\xi\big\}.
\]

\end{proposition}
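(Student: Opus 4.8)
The plan is to realize $\vartheta_I$ as a specialization of Shimura's ternary theta series $g(z;\eta)$ and then feed the lattice $L_I=R_r(I)/\OO$, equipped with the form $\Delta$, into Propositions~\ref{prop:transf-theta},~\ref{prop:eta} and~\ref{prop:transf-eta}. First I would fix an $\OO$-basis of a representative lattice for $L_I$, let $S\in M_3(F)$ be the totally negative definite Gram matrix of $\Delta$ in that basis, and take $\eta=\eta_I$ to be the characteristic function of the closure $\widehat{L_I}=L_I\otimes\widehat{\OO}$ in $F_\mathbf{f}^3$. Since $\Delta$ is $F$-valued and $\eta_I$ is real valued, a direct comparison of the exponentials gives $\vartheta_I(z)=\overline{g(z;\eta_I)}$; conjugating the transformation law of Proposition~\ref{prop:transf-theta} turns the factor $\overline{J(\beta,z)}$ into $J(\beta,z)$, which is precisely the automorphy factor in the weight $\mathbf{3/2}$ slash action. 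Before anything else I would record two integrality facts: for $x\in R_r(I)$ one has $\Tr(x),N(x)\in\OO$, so $\Delta$ is $\OO$-valued on $L_I$; and $\Delta(x+c)=\Delta(x)$ for $c\in F$, so $\Delta$ genuinely descends to $V=B/F$.

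To obtain the modularity and pin down the level, I would apply Proposition~\ref{prop:eta} with $M$ a representative lattice for $L_I$. The integrality of $\Delta$ lets me take $\id{r}=\OO$, hence $\id{a}=\id{r}^{-1}\cap\OO=\OO$; stability of $\widehat{L_I}$ under $\widehat{\OO}^\times$ lets me take $\id{z}=\OO$. With these choices the level collapses to $\id{b}=4\D\cap 4\id{d}^{-1}\id{n}^{-1}$, so the crux is to compute the ideal $\id{n}$ governing the values of $\Delta$ on the dual lattice of $R_r(I)/\OO$ with respect to the bilinear form of $\Delta$. I expect this to be the main obstacle: it is the quaternion-arithmetic computation of the level of the ternary form $\Delta$ attached to an Eichler order of discriminant $\D$, and it is what forces the final level $4\D$. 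In parallel I would check that the character is the one fixed before the statement: writing $B=\big(\tfrac{a,b}{F}\big)$ and using trace-zero representatives one finds $\det S\equiv -1\pmod{(F^\times)^2}$, so the character of $F(\sqrt{\det S})/F$ is exactly the character $\psi$ of $F(\sqrt{-1})/F$, while the factor $\psi_\id{f}(d_\beta)$ coming out of Proposition~\ref{prop:eta} matches the $\psi_\id{b}$-twist in the slash action via $a_\beta d_\beta\equiv 1$.

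The Kohnen plus condition is the cleanest step. Reading the $\id{a}=\OO$ Fourier expansion directly off the definition of $\vartheta_I$ gives $\lambda(\xi,\OO,\vartheta_I)=\#\{[x]\in L_I:-\Delta(x)=\xi\}$, and since $\Delta(x)=\Tr(x)^2-4N(x)\equiv\Tr(x)^2\pmod{4\OO}$, any $\xi$ with a nonzero coefficient satisfies that $-\xi$ is a square modulo $4\OO$. Thus $\lambda(\xi,\OO,\vartheta_I)=0$ whenever $-\xi$ is not such a square, which is exactly the plus-space condition; this also confirms the coefficient formula for $\id{a}=\OO$, where $N(\OO)^{-1}=1$.

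Finally, for the coefficient formula at a general $\id{a}$ I would use Proposition~\ref{prop:transf-eta}. For $\beta$ with diagonal part $t\in F_\mathbf{f}^\times$ generating $\id{a}$, the identity ${}^{\beta}\eta_I(x)=\psi_\mathbf{a}(d_\beta)\psi^*(d_\beta\id{a}_\beta^{-1})N(\id{a}_\beta)^{3/2}\eta_I(xt)$ replaces $\eta_I=\mathbf{1}_{\widehat{L_I}}$ by $\mathbf{1}_{t^{-1}\widehat{L_I}}$, i.e. by the characteristic function of the scaled lattice $\id{a}^{-1}L_I$. Computing the theta series of this scaled function and comparing it with the definition of $\lambda(\xi,\id{m},\cdot)$, the factor $N(\id{a}_\beta)^{3/2}$, set against the $\mathbf{3/2}$-power in the automorphy factor $J$ and the normalization built into $\lambda(\xi,\id{m},\cdot)$, collapses to $N(\id{a})^{-1}$, yielding $\lambda(\xi,\id{a},\vartheta_I)=N(\id{a})^{-1}\#\{[x]\in\id{a}^{-1}L_I:-\Delta(x)=\xi\}$. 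The care needed here is purely bookkeeping of Shimura's normalizations, consistent with the relation $\lambda(\xi b^2,\id{m},f)=N_{F/\Q}(b)\psi_\mathbf{a}(b)\lambda(\xi,b\id{m},f)$ recorded above.
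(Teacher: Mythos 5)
Your outline reproduces the paper's proof almost step for step: the same test function $\eta$ (the characteristic function of the adelic closure of $L_I$), the identity $g(z;\eta)=\overline{\vartheta_I(z)}$ used to flip $\overline{J(\beta,z)}$ into $J(\beta,z)$, the determinant computation $\det S\equiv -1$ in $F^\times/(F^\times)^2$ identifying the character with $\psi$ of $F(\sqrt{-1})/F$ (the paper gets $-(8ab)^2$ from the trace-zero basis $\{[i],[j],[k]\}$, matching your sketch), the choices $\id{r}=\id{z}=\OO$ in Proposition~\ref{prop:eta}, the congruence $\Delta(x)\equiv\Tr(x)^2\pmod{4\OO}$ for the Kohnen plus condition, and the comparison of Proposition~\ref{prop:transf-eta} with formula (3.14c) of \cite{shim-hhalfint} to extract $\lambda(\xi,\id{a},\vartheta_I)$ with the factor $N(\id{a})^{-3/2}$ against $N(\id{a})^{-1/2}$ producing $N(\id{a})^{-1}$. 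All of these steps are correct and coincide with the paper's argument.

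There is, however, one genuine gap, and you located it yourself: the determination of the ideal $\id{n}$ in hypothesis (2) of Proposition~\ref{prop:eta}, which you defer as ``the main obstacle'' without carrying it out. This is the only substantive quaternionic input of the entire proof; without it neither the level $4\D$ nor membership in $M_\mathbf{3/2}(4\D,\psi)$ is established, and your level computation $\id{b}=4\D\cap 4\id{d}^{-1}\id{n}^{-1}$ is left with an undetermined $\id{n}$. The paper resolves it by taking $\id{n}=\id{d}^{-2}\D^{-1}$: if $[x]\in V$ satisfies $[x]S[y]^t\in\id{d}^{-1}$ for all $[y]\in L_I$, one may choose the representative with $\Tr(x)=0$ (replace $x$ by $x-\tfrac{1}{2}\Tr(x)$, which does not change the class in $V=B/F$); the duality hypothesis then becomes $2\Tr(x\overline{y})\in\id{d}^{-1}$ for all $y\in R_r(I)$, i.e.\ $2x$ lies in the dual of the Eichler order $R_r(I)$ with respect to the trace form, and a known lemma on duals of Eichler orders (\cite[Lemma 1.2.5]{gebh}) gives $N(2x)\in\id{d}^{-2}\D^{-1}$, whence $\Delta([x])=-N(2x)\in\id{n}$. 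To be complete, your proposal must supply this computation (or an equivalent local analysis at the primes dividing $\D$ and $2$); the remaining steps are, as you say, bookkeeping of Shimura's normalizations.
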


\begin{proof}

Let $\{v_1,v_2,v_3\}$ be a basis of $V$, and let $\id{a}_1,\id{a}_2,\id{a}_3$ be fractional ideals such that $L_I=\oplus_{i=1}^3 \id{a}_i v_i$. Through this basis we identify $V$ with $F^3$. Let $S$ be the matrix of the quadratic form $\Delta$ with respect to this basis. If $B\simeq\left<1,i,j,k : i^2=a,\, j^2= b,\, ij=-ji=k\right>_F$, then the determinant of $\Delta$ with respect to the basis $\{[i],[j],[k]\}$ equals $-(8 a b)^2$. This shows that $\det(S)=-1\in F^\times / (F^\times)^2$.

Let $\eta\in \mathcal{S}(F_\mathbf{f}^3)$ be the characteristic function of $M=\id{a}_1\oplus\id{a}_2\oplus\id{a}_3$. Then the theta series $g(z;\eta)$ defined by $S$ and $\eta$ satisfies that
\begin{equation}\label{eqn:g=theta}
 g(z;\eta)=\sum_{\xi\in F^3} \eta(\xi) e_F(\Delta(\xi),\tfrac{\overline{z}}{2})=\overline{\vartheta_I(z)}.
\end{equation}

The function $\eta$ satisfies the hypotheses of Proposition~\ref{prop:eta}, taking $\id{r}=\id{z}=\OO$, and $\id{n}=\id{d}^{-2} \D^{-1}$. The first two assertions are clear. To prove the last equality, take $[x]\in V$ such that $[x] S [y]^t \in \id{d}^{-1}$ for every $[y]\in L_I$. Assume, without loss of generality, that $\Tr(x)=0$. Then, a simple calculation shows that $2\Tr(x\overline{y})\in \id{d}^{-1}$ for every $y\in R_r(I)$. Hence, by \cite[Lemma 1.2.5]{gebh}, we have that $\Delta([x])=-N(2x)\in \id{d}^{-2} \D^{-1}$.

Then Propositions~\ref{prop:transf-theta} and~\ref{prop:eta} together with \eqref{eqn:g=theta} give that
\[
 \vartheta_I(\beta z) = \psi_\id{f}^{-1}(d_\beta) J(\beta,z) \vartheta_I(z) \quad \forall\,\beta\in  \Gamma[2^{-1}\id{d},4\D].
\]
Since $\psi$ is quadratic and its conductor $\id{f}$ divides $4\D$, we have that $\psi_\id{f}^{-1}(d_\beta)=\psi_{4\D}(a_\beta)$ for all $\beta \in\Gamma[2^{-1}\id{d},4\D] $. This proves that $\vartheta_I\in M_\mathbf{3/2}(4\D,\psi)$. To see that it belong to the Kohnen plus space, note that 
\[
 \lambda(\xi,\OO,\vartheta_I)=\#\{[x]\in L_I:-\Delta(x)=\xi\big\}
\]
equals $0$ if $-\xi$ is not a square modulo $4\OO$.

We now consider the Fourier coefficients of $\vartheta_I$. Given a fractional ideal $\id{a}$, take $t\in F_\mathbf{f}^\times$ such that $t\OO=\id{a}$. Let $\beta\in \SL_2(F)$ be as in Proposition~\ref{prop:transf-eta}. Since $\beta = \left(\begin{smallmatrix} t & 0\\ 0 & t^{-1}\end{smallmatrix}\right) q$ with $q\in \Gamma_\A[\id{f}]$, we have that $\id{a}_\beta=t^{-1}\OO=\id{a}^{-1}$. Then by \cite[(3.14c)]{shim-hhalfint} we have that
\begin{equation}\label{eqn:coefff1}
 \psi_\mathbf{a}(d_\beta)\psi^*(d_\beta \id{a})
J(\beta,\beta^{-1}z) \vartheta_I(\beta^{-1}z) = N(\id{a})^{-1/2} \sum_{\xi\in F} \lambda(\xi,\id{a},\vartheta_I) e_F(\xi,z/2).
\end{equation}

On the other hand, by Propositions~\ref{prop:transf-theta} and~\ref{prop:transf-eta}, we have that
\begin{align}\label{eqn:coefff2}
 J(\beta,\beta^{-1}z)\vartheta_I(\beta^{-1}z) & = \overline{g(z;^{\beta}\eta)} \nonumber \\
& = \psi_\mathbf{a} (d_\beta)\psi^*(d_\beta\id{a}) N(\id{a})^{-3/2}\sum_{\xi\in F^3} \eta(\xi t)e_F(\Delta(\xi),\tfrac{z}{2}). 
\end{align}
Since the map $\xi\mapsto\eta(\xi t)$ equals $1$ if $\xi\in \id{a}^{-1} L_I$ and $0$ otherwise, comparing \eqref{eqn:coefff1} and \eqref{eqn:coefff2} yields the desired equality.

\end{proof}

We now prove that this construction is $\TT_0$-linear. For this, we start with the following auxiliary result.

\begin{lemma}\label{lemma:cuentas-coeff}
 Let $\id{p}$ be a prime ideal such that $\id{p}\nmid 4\D$. Let $[x]\in \id{p}^{-1} L_I$. Then,
 \[
  \#\{J\in t_\id{p}(I): [x]\in L_J\}=\begin{cases}
                                      1+N(\id{p}), \quad & [x]\in \id{p} L_I, \\
                                      1+\big(\tfrac{\Delta(x)}{\pp}
\big), \quad & [x]\in L_I\setminus \id{p} L_I, \\
0 \text{ or }1, \quad & [x]\in \id{p}^{-1} L_I \setminus L_I.
                                     \end{cases}
 \]
 
\end{lemma}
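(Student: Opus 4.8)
The plan is to work locally at $\pp$, reducing the global counting problem to a computation in $M_2(\OO_\pp)$, since for $\pp\nmid\D$ we may identify $R_r(I_\pp)$ with $M_2(\OO_\pp)$, and the condition $[x]\in L_J$ only constrains the completion $J_\pp$ (for $\qq\neq\pp$ we have $J_\qq=I_\qq$ automatically for $J\in t_\pp(I)$). First I would use the bijection, already appearing in the proof of Proposition~\ref{prop:hecke-cuat}(2), between $t_\pp(I)$ and the orbit space $R_\pp^\times\backslash\{y_\pp\in R_\pp:\OO_\pp N(y_\pp)=\pp\OO_\pp\}$, where $J$ corresponds to $y_\pp$ via $J_\pp=I_\pp y_\pp$. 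Under this bijection, $L_J=R_r(J_\pp)/\OO_\pp$ locally equals $y_\pp^{-1}R_r(I_\pp)y_\pp/\OO_\pp$, so the condition $[x]\in L_J$ becomes $y_\pp x y_\pp^{-1}\in R_r(I_\pp)$ (using that conjugation preserves $\Tr$ and $N$, hence $\Delta$, and that the class in $V$ is unaffected by the $\OO$-summand). Thus the count is the number of $\SL_2(\OO_\pp)$-orbits (equivalently, by the local uniformizer normalization, of the representatives from Lemma~\ref{lemma:nro-orbitas}) of degree-$\pp$ sublattices $y_\pp$ such that $y_\pp x y_\pp^{-1}$ is integral.

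The core of the argument is then an explicit matrix computation. Writing $x\in M_2(\OO_\pp)\otimes F_\pp$ with $\pp^{-1}$-integral entries (reflecting $[x]\in\pp^{-1}L_I$), I would run through the $1+N(\pp)$ standard coset representatives of Lemma~\ref{lemma:nro-orbitas} and ask, for each, whether the conjugate $y_\pp x y_\pp^{-1}$ lands in $M_2(\OO_\pp)$. The answer splits exactly according to how deep $x$ sits in the $\pp$-adic filtration. In the top case $[x]\in\pp L_I$, the element $x$ is so divisible that every conjugate is integral, giving all $1+N(\pp)$ sublattices. In the middle case $[x]\in L_I\setminus\pp L_I$, the conjugate is integral iff $x\bmod\pp$ lies in the conjugated order, and counting these reduces to a point-count on the conic/quadric cut out by $x\bmod\pp$ over the residue field $\OO/\pp$, which is governed by whether $\Delta(x)$ is a square mod $\pp$—producing the $1+\bigl(\tfrac{\Delta(x)}{\pp}\bigr)$ term. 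In the bottom case $[x]\in\pp^{-1}L_I\setminus L_I$, at most one normalization can clear the denominator, giving $0$ or $1$.

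The main obstacle I anticipate is the middle case, where one must match the orbit count precisely to the quadratic residue symbol. Concretely, after reducing mod $\pp$, the number of degree-$\pp$ lines (points of $\mathbb{P}^1$ over $\OO/\pp$) fixed appropriately by the action of $\bar x$ is determined by the number of eigendirections of $\bar x$, i.e.\ by the factorization of its characteristic polynomial $t^2-\Tr(x)t+N(x)$ over $\OO/\pp$; its discriminant is precisely $\Delta(x)\bmod\pp$. Thus $\bar x$ has two rational eigendirections, one, or none according as $\bigl(\tfrac{\Delta(x)}{\pp}\bigr)$ equals $+1$, $0$, or $-1$, yielding $1+\bigl(\tfrac{\Delta(x)}{\pp}\bigr)$ after accounting for the fixed sublattices—and one must check carefully that the degenerate case $\pp\mid\Delta(x)$ (a repeated eigenvalue) contributes exactly one rather than more. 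I would verify this by direct computation in a convenient integral basis, taking care that $\pp\nmid 4\D$ guarantees $R_r(I_\pp)\cong M_2(\OO_\pp)$ and that $2$ is a unit so that $\Delta$ is nondegenerate mod $\pp$, legitimizing the passage from the quadratic form $\Delta$ to the quadratic residue symbol.
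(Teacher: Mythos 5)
Your setup coincides with the paper's: localize at $\pp$ (using $\pp\nmid\D$ to identify $R_r(I)_\pp$ with $M_2(\OO_\pp)$), put $t_\pp(I)$ in bijection with the orbit representatives of Lemma~\ref{lemma:nro-orbitas}, and test each of the $1+N(\pp)$ representatives for integrality after conjugation. Your treatment of the first two cases, via fixed lines of $\bar x$ on $\mathbb{P}^1$ over the residue field and the discriminant of the characteristic polynomial, is a correct and slightly more conceptual rendering of the paper's root count of $P=-cX^2+(d-a)X+b$; in particular the degenerate subcase $\pp\mid\Delta(x)$ that you flag as the main obstacle is actually immediate, since in the middle case $\bar x$ is non-scalar, so a repeated eigenvalue has exactly one eigendirection. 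One caveat on the setup: since $[x]$ lives in $V=B/F$, the correct local condition is $y_\pp x_\pp y_\pp^{-1}\in F_\pp+M_2(\OO_\pp)$, not $y_\pp x_\pp y_\pp^{-1}\in M_2(\OO_\pp)$; modding out by $F$ rather than $\OO$ is harmless in the first two cases, but it is exactly what changes the nature of the third case.

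That third case is where your proposal has a genuine gap: you assert that ``at most one normalization can clear the denominator, giving $0$ or $1$,'' with no argument, and this is precisely the claim on which the paper spends half its proof. Writing $\pi_\pp x_\pp=\big(\begin{smallmatrix} a&b\\c&d\end{smallmatrix}\big)\in M_2(\OO_\pp)$ with $x_\pp\notin F_\pp+M_2(\OO_\pp)$, the integrality-mod-center condition for the representative attached to $\alpha$ is not the mod-$\pp$ stabilization of the middle case but the strictly stronger pair $\pi_\pp^2\mid -c\alpha^2+(d-a)\alpha+b$ and $\pi_\pp\mid(d-a)-2c\alpha$ (and $\pi_\pp^2\mid c$, $\pi_\pp\mid d-a$ for the representative at infinity); that is, $\alpha$ must be a \emph{double} root of $P$ modulo $\pp$, with vanishing to order two. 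Note that your mod-$\pp$ fixed-line heuristic alone would predict up to two representatives when $\Delta(\pi_\pp x_\pp)$ is a square mod $\pp$, so the derivative-type condition is essential, and uniqueness still needs a computation: if two representatives both worked, combining the congruences forces $\pi_\pp\mid c$ (using that $\pp\nmid 2$, as $\pp\nmid 4\D$), whence $\pi_\pp\mid d-a$ and $\pi_\pp\mid b$, making $\pi_\pp x_\pp$ scalar modulo $\pp$ and contradicting $x_\pp\notin F_\pp+M_2(\OO_\pp)$. Equivalently, in lattice terms the condition forces $\mathrm{im}(\bar x'-\bar\lambda)\subseteq\ell\subseteq\ker(\bar x'-\bar\lambda)$ for the line $\ell$ defining the sublattice, which pins $\ell$ down uniquely. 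Without some such argument your case $[x]\in\pp^{-1}L_I\setminus L_I$ is unproven, and it is needed in Theorem~\ref{teo:theta-H}, where the term $a(\xi,\id{p},[I])-a(\xi,\OO,[I])$ relies on each such $[x]$ contributing at most one ideal $J$.
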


\begin{proof}
 
 Note that given $J\in t_\id{p}(I)$, we have that $[x]\in L_J$ if and only if $[x]\in (L_J)_\id{p}$, since $(L_I)_\id{q}=(L_J)_\id{q}$ for every $\id{q}\neq\id{p}$. Since $\id{p}\nmid 4\D$ we can identify $R_r(I)_\id{p}$ with $M_2(\OO_\id{p})$. Then, the set $\{J\in t_\id{p}(I): [x]\in L_J\}$ is in bijection with the set
\[
\mathfrak{X}=\SL_2(\OO_\id{p}) \backslash \{
y_\id{p}\in M_2(\OO_\id{p}): \det y_\id{p}=\pi_\id{p},\, y_\id{p} x_\id{p}
y_\id{p}^{-1}\in F_\id{p}+M_2(\OO_\id{p})\},
\]
letting to each such $y_\id{p}$ correspond the ideal $J\in \mathfrak{I}(R)$ given locally by $J_\id{q} = I_\id{q}$ for $\id{q} \neq \id{p}$, and $J_\id{p}=I_\id{p} y_\id{p}$.

To compute the set $\mathfrak{X}$, we use the same system of representatives for the action of $\SL_2(\OO_\id{p})$ in $\{ y_\id{p}\in M_2(\OO_\id{p}): \det y_\id{p}=\pi_\id{p}\}$ as in Lemma~\ref{lemma:nro-orbitas}. We start by considering the first two cases. Assume then that $x\in R_r(I)$. Write $x_\id{p}=\left(\begin{smallmatrix} a & b\\ c & d \end{smallmatrix}\right)\in M_2(\OO_\id{p})$. Then, we have that
\begin{align*}
\left(\begin{matrix} \pi_\id{p} & 0 \\ 0 & 1\end{matrix}\right) x_\id{p}
\left(\begin{matrix} \pi_\id{p} & 0 \\ 0 & 1\end{matrix}\right)^{-1}
\in F_\id{p}+M_2(\OO_\id{p})
& \Longleftrightarrow \pi_\id{p}\mid c, \\
\left(\begin{matrix} 1 & \alpha \\ 0 & \pi_\id{p}
\end{matrix}\right) x_\id{p} \left(\begin{matrix} 1 & \alpha \\ 0 & \pi_\id{p}
\end{matrix}\right)^{-1} \in F_\id{p}+ M_2(\OO_\id{p})
& \Longleftrightarrow  \pi_\id{p}\mid -c \alpha^2 + (d-a)\alpha + b.\\
\end{align*}
If $x_\id{p}\in \OO_\id{p}+\id{p} M_2(\OO_\id{p})$, we see that $\mathfrak{X}$ has $1+N(\id{p})$ elements. If $x_\id{p}\notin \OO_\id{p}+\id{p} M_2(\OO_\id{p})$, let $P=-cX^2 + (d-a)X + b\in k_\id{p}[X]$. Then $P\neq0$, and its discriminant equals $(d-a)^2+4bc=\Delta(x_\id{p})$. Hence $\mathfrak{X}$ has $1+\Big(\frac{\Delta(x_\id{p})}{\pp}\Big)$ elements.

Now consider the case when $[x]\in \id{p}^{-1} L_I \setminus L_I$. Assume then that $\pi_\id{p} x_\id{p}=\left(\begin{smallmatrix} a & b \\ c & d \end{smallmatrix}\right)\in M_2(\OO_\id{p})$, and that $x_\id{p}\notin F_\id{p}+M_2(\OO_\id{p})$. Then, we have that
\begin{align}
\label{eqn:cuentafea1}\left(\begin{matrix} \pi_\id{p} & 0 \\ 0 &
1\end{matrix}\right) x_\id{p}
\left(\begin{matrix} \pi_\id{p} & 0 \\ 0 & 1\end{matrix}\right)^{-1}
\in F_\id{p}+M_2(\OO_\id{p})
& \Longleftrightarrow \begin{cases}
\pi_\id{p}^2\mid c,\\\pi_\id{p}\mid d-a,                       
                      \end{cases}
 \\
\label{eqn:cuentafea2}\left(\begin{matrix} 1 & \alpha \\ 0 & \pi_\id{p}
\end{matrix}\right) x_\id{p} \left(\begin{matrix} 1 & \alpha \\ 0 & \pi_\id{p}
\end{matrix}\right)^{-1} \in F_\id{p}+ M_2(\OO_\id{p})
& \Longleftrightarrow\begin{cases}
\pi_\id{p}^2\mid -c \alpha^2 + (d-a)\alpha +
b,\\\pi_\id{p}\mid (d-a)-2c\alpha.                      
                     \end{cases}
\end{align}

 Suppose that \eqref{eqn:cuentafea1} holds, and that there exists $\alpha$ such that \eqref{eqn:cuentafea2} holds. Then $\pi_\id{p}\mid c,d-a,b$, thus contradicting the fact that $x_\id{p}\notin F_\id{p}+M_2(\OO_\id{p})$. Finally, assume that there exist distinct $\alpha_1,\alpha_2$ such that \eqref{eqn:cuentafea2} holds. Then, substracting equations we see that $\pi_\id{p}\mid 2c$. If $\pi_\id{p}\mid c$ we have that $\pi_\id{p}\mid d-a,b$, which again is not possible. If $\pi_\id{p}\mid 2$, we have that $\pi_\id{p}\mid d-a$, and hence the polynomial $P$ defined above has null discriminant. This is a contradiction, since $P$ has $\alpha_1,\alpha_2$ as roots. Thus, we have proved that $\mathfrak{X}$ has at most one element, which completes the proof.
 \end{proof}

For $\xi\in F^+\cup\{0\}$, a fractional ideal $\id{a} $ and $I\in\mathfrak{I}(R)$, denote
\[
 a(\xi,\id{a},[I])=\#\{[x]\in \id{a}^{-1}L_I:-\Delta(x)=\xi\big\}.
\]
Let $e_\xi\in M(R)$ be given by
\[
 e_\xi=\sum_{[J]\in \cl(R)} \tfrac{a(\xi,\OO,[J])}{\ll{[J],[J]}}  [J].
\]
Note that $e_0=\sum_{[\id{a}]\in\cl(F)} e_{[\id{a}]}$.

\begin{theorem}\label{teo:theta-H}
 
Given $v\in M(R)$, let
\[
 \theta(v)(z)=\sum_{\xi\in\OO^+\cup\{0\}} \ll{e_\xi,v}
e_F\big(\xi,\tfrac{z}{2}\big), \quad z\in \HH^\mathbf{a}.
\]
Then, $\theta(v)\in M^+_\mathbf{3/2}(4\D,\psi)$, and $\theta(v)$ is cuspidal if and only if $v$ is cuspidal. Furthermore, the map $\theta$ is $\TT_0$-linear.

\end{theorem}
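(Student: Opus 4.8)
The plan is to begin by recognizing $\theta$ as the linear extension of the assignment $[I]\mapsto\vartheta_I$. Since the class basis is orthogonal for $\ll{\,,\,}$, one computes directly that $\ll{e_\xi,[I]}=a(\xi,\OO,[I])$, which by Proposition~\ref{prop:theta-tern} equals $\lambda(\xi,\OO,\vartheta_I)$. As a form in $M_\mathbf{3/2}(4\D,\psi)$ is determined by its principal Fourier coefficients $\lambda(\cdot,\OO,\cdot)$, this forces $\theta([I])=\vartheta_I$, and hence $\theta(v)=\sum_{[I]}\mu_{[I]}\vartheta_I$ whenever $v=\sum_{[I]}\mu_{[I]}[I]$. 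In particular $\theta(v)$ is a $\C$-linear combination of the $\vartheta_I$, so the assertion $\theta(v)\in M^+_\mathbf{3/2}(4\D,\psi)$ is immediate from Proposition~\ref{prop:theta-tern}. This identification reduces the two remaining assertions to statements about the individual $\vartheta_I$.

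For the $\TT_0$-linearity it then suffices, by linearity, to prove for each prime $\pp\nmid 4\D$ the identity $\sum_{J\in t_\pp(I)}\vartheta_J=T_\pp\vartheta_I$ on a basis vector $[I]$, since $\theta(T_\pp[I])=\sum_{J\in t_\pp(I)}\vartheta_J$. I would prove this by comparing principal Fourier coefficients. Expanding $\lambda(\xi,\OO,T_\pp\vartheta_I)$ through Proposition~\ref{prop:hecke-coef-me} (using $\psi^*(\pp)=\left(\tfrac{-1}{\pp}\right)$) and rewriting each of its three terms by Proposition~\ref{prop:theta-tern} turns the left-hand side into a signed count of $[x]$ with $-\Delta(x)=\xi$ lying in $\pp L_I$, in $L_I$, and in $\pp^{-1}L_I$. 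The right-hand side is $\sum_{[x]\in\pp^{-1}L_I,\,-\Delta(x)=\xi}\#\{J\in t_\pp(I):[x]\in L_J\}$, and Lemma~\ref{lemma:cuentas-coeff} evaluates the inner count in the three regimes $[x]\in\pp L_I$, $[x]\in L_I\setminus\pp L_I$, and $[x]\in\pp^{-1}L_I\setminus L_I$. Matching the two sides then reduces to an elementary bookkeeping. The one delicate point is the third regime, where Lemma~\ref{lemma:cuentas-coeff} only gives ``$0$ or $1$'': one must check the count is always $1$ here, which follows because $[x]\neq0$ forces $\Delta(x)\neq0$, and then one of the representatives appearing in the proof of that lemma can be shown to satisfy the required divisibility (the degenerate configuration being excluded precisely because $[x]\notin F_\pp+M_2(\OO_\pp)$).

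The remaining, and hardest, assertion is that $\theta(v)$ is cuspidal exactly when $v$ is. Here the obstacle is that the constant terms at infinity are not enough: by Proposition~\ref{prop:theta-tern} one has $\lambda(0,\id{m},\theta(v))=N(\id{m})^{-1}\ll{e_0,v}$ for every $\id{m}$, since total definiteness of $\Delta$ leaves only $[x]=0$ with $-\Delta(x)=0$, and as $e_0=\sum_{[\id{a}]}e_{[\id{a}]}$ this sees only the single quantity $\ll{e_0,v}=\sum_{[\id{a}]}\ll{e_{[\id{a}]},v}$, far too coarse to detect cuspidality. The plan is therefore to evaluate the constant terms of $\theta(v)$ at the remaining cusps, namely $\lambda(0,\id{m},\theta(v)\vert\gamma)$ for $\gamma\in\SL_2(F)$. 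Using Proposition~\ref{prop:transf-theta} together with the action $\eta\mapsto{}^{\gamma}\eta$ on $\mathcal{S}(F_\mathbf{f}^3)$, the form $\vartheta_I\vert\gamma$ is, up to nonzero automorphy factors, the theta series attached to the transformed function ${}^{\gamma}\eta_I$, whose constant term is again just its value at $0$. One then shows that, as the cusp varies, ${}^{\gamma}\eta_I(0)$ is supported on the ideal classes $[I]$ in a single norm class $[\id{a}]=[\id{a}(\id{m},\gamma)]$, so that $\lambda(0,\id{m},\theta(v)\vert\gamma)$ is a nonzero multiple of $\ll{e_{[\id{a}]},v}$, and that letting $(\id{m},\gamma)$ range over the cusps makes $[\id{a}]$ sweep out all of $\cl(F)$. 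Every constant term then vanishes if and only if $\ll{e_{[\id{a}]},v}=0$ for all $[\id{a}]\in\cl(F)$, i.e. if and only if $v$ is orthogonal to $E(R)$, which is cuspidality of $v$. I expect the genuine difficulty to lie in this last step: tracking the transformed Schwartz functions ${}^{\gamma}\eta_I$ through the Weil-representation action and identifying, cusp by cusp, the norm class they single out. The other two assertions are essentially formal once $\theta([I])=\vartheta_I$ has been established.
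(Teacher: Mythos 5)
Your identification $\theta([I])=\vartheta_I$, the membership in $M^+_\mathbf{3/2}(4\D,\psi)$ via Proposition~\ref{prop:theta-tern}, and the proof of $\TT_0$-linearity by comparing $\lambda(\xi,\OO,\cdot)$ through Propositions~\ref{prop:theta-tern} and~\ref{prop:hecke-coef-me} and the three regimes of Lemma~\ref{lemma:cuentas-coeff} are exactly the paper's argument. One correction in the third regime, where you rightly note that ``$0$ or $1$'' must be upgraded to ``exactly $1$'': your stated reason, that $[x]\neq0$ forces $\Delta(x)\neq0$, is not the operative fact. The count $0$ genuinely occurs for nonzero $[x]$, namely when $\Delta(x)\notin\OO_\pp$: take $\pi_\pp x_\pp=\left(\begin{smallmatrix}0&b\\c&0\end{smallmatrix}\right)$ with $c$ of valuation exactly $1$ and $\pi_\pp\nmid b$, and check that no representative from Lemma~\ref{lemma:nro-orbitas} satisfies \eqref{eqn:cuentafea1} or \eqref{eqn:cuentafea2}. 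What rescues the step is that in the theorem one only counts $[x]$ with $-\Delta(x)=\xi\in\OO^+$: writing $\pi_\pp x_\pp=\left(\begin{smallmatrix}a&b\\c&d\end{smallmatrix}\right)$, integrality gives $\pi_\pp^2\mid(d-a)^2+4bc$, and a short case analysis on the valuation of $c$ (using $x_\pp\notin F_\pp+M_2(\OO_\pp)$ and $\pp\nmid2$) shows that exactly one representative satisfies the required divisibilities. So the claim is true, but the justification must invoke the integrality of $\xi$, not its nonvanishing.

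The genuine gap is the cuspidality assertion. Your plan is to evaluate $\lambda(0,\id{m},\theta(v)\vert\gamma)$ at all cusps and show that, cusp by cusp, one picks up a nonzero multiple of a single $\ll{e_{[\id{a}]},v}$ with $[\id{a}]$ sweeping $\cl(F)$. Besides being left unexecuted (you defer the Weil-representation computation, which is the whole content of the step), the intended conclusion is unattainable. Fix one identification $(V,\Delta)\simeq(F^3,S)$ and let $\eta_I$ be the characteristic function of the completions of $L_I$. The lattices $L_I$ are everywhere locally isometric, since $R_r(I)_\pp=x_\pp^{-1}R_\pp x_\pp$ and conjugation preserves $\Delta$; and the action $\eta\mapsto{}^{\beta}\eta$ of \cite[Proposition 2.4]{shim-hhalfint} is local and commutes with the orthogonal group of $S$ acting on $\mathcal{S}(F_\mathbf{f}^3)$, while evaluation at $0$ is invariant under that group. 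Hence ${}^{\gamma}\eta_I(0)={}^{\gamma}\eta_R(0)$ for every class $[I]$ and every $\gamma$, and since $S$ is totally definite the constant term of the transformed theta series is exactly this value. Consequently every constant term of $\theta(v)$, at every cusp, is a cusp-dependent multiple of the single quantity $\sum_{[I]}\lambda_{[I]}=\ll{e_0,v}$: no cusp separates the norm classes, and your method can never detect more than the coefficient at infinity that you yourself dismissed as too coarse. This genus invariance is precisely how the paper proves the substantive direction in one stroke: if $v$ is cuspidal then $\sum_{[I]}\lambda_{[I]}=0$, and a linear combination of theta series of forms in a single genus with vanishing coefficient sum is cuspidal by Siegel's classical theorem, in Walling's totally real generalization \cite{walling} --- a one-line citation replacing your cusp-by-cusp analysis. (For the record, the paper's written proof establishes only this ``if'' direction; your proposal as it stands establishes neither, and in particular no constant-term argument of the kind you describe can yield the ``only if''.)
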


\begin{proof}
First assume that $v=[I]$, with $I\in \mathfrak{I}(R)$. Then $\theta([I])=\vartheta_I$, which implies the first claim. To prove the Hecke linearity, let $\id{p}$ be a prime ideal not dividing $4\D$. Let $f=\theta(T_\id{p}([I]))$. Since
\[
 f=\sum_{\xi\in\OO^+\cup\{0\}} \Big(\sum_{J\in t_\id{p}(I)} \ll{e_\xi,[J]}\Big) e_F\big(\xi,\tfrac{z}{2}\big).
\]
we have that
\[
 \lambda(\xi,\OO,f)=\#\{(J,[x])\in \mathfrak{I}(R)\times V: J\in
t_\id{p}(I),\, [x]\in L_J,\, -\Delta(x)=\xi\}.
\]
To compute the size of this set, we use Lemma~\ref{lemma:cuentas-coeff}, considering the following three possibilities for those $[x]\in V$ for which there exists $J\in t_\id{p}(I)$ such that $[x]\in L_J,\, -\Delta(x)=\xi$. Note that since $\id{p} I\subseteq J \subseteq I$ for $J\in t_\id{p}(I)$, then every such $[x]$ belongs to $\id{p}^{-1} L_I$.
\begin{itemize}
\item $[x]\in \id{p} L_I$. There are $a(\xi,\id{p}^{-1},[I])$ such $[x]$, and for each one there are $1+N(\id{p})$ ideals $J$ as above.
 \item $[x]\in L_I\setminus\id{p} L_I$. There are $a(\xi,\OO,[I])-a(\xi,\id{p}^{-1},[I])$ such $[x]$, and for each one there are $1+\big(\tfrac{\Delta(x)}{\pp}\big)$ ideals $J$ as above. Note that $a(\xi,\id{p}^{-1},[I])\big(\frac{\Delta(x)}{\pp}\big)=0$, since if there exists $[y]\in \id{p}L_I$ such that $\Delta([y])=\xi$, then $\id{p}\mid \xi$.
 \item $[x]\in\id{p}^{-1} L_I\setminus L_I$. There are $a(\xi,\id{p},[I])-a(\xi,\OO,[I])$ such $[x]$, and for each one there is just one ideal $J$ as above.
\end{itemize}

Adding up, using Propositions \ref{prop:hecke-coef-me} and \ref{prop:theta-tern} we see that
\begin{align*}
\lambda(\xi,\OO,f)&=a(\xi,\id{p}^{-1},[I])\big(1+N(\id{p})\big)+ \\
&+
\big(a(\xi,\OO,[I])-a(\xi,\id{p}^{-1},[I])\big)\Big(1+\big(\tfrac{\Delta(x)}{\pp
} \big)\Big) \\
& + a(\xi,\id{p},[I])-a(\xi,\OO,[I])\\
&=N(\id{p})a(\xi,\id{p}^{-1},[I])+a(\xi,\OO,[I])\big(\tfrac{\Delta(x)}{\pp}
\big)+a(\xi,\id{p},[I]) \\
&=\lambda(\xi,\id{p}^{-1},\vartheta_I)+\big(\tfrac{\xi}{\pp}\big)\psi^*(\id{p}
)\lambda(\xi,\OO,\vartheta_I)+N(\id{p})\lambda(\xi,\id{p},\vartheta_I)\\
&=\lambda(\xi,\OO,T_\id{p}(\vartheta_I)),
\end{align*}
which proves that $T_\id{p}(\theta([I]))=\theta(T_\id{p}([I]))$.

Finally, suppose that $v=\sum_{[I]}\lambda_{[I]} [I]$ is cuspidal. Then $\theta(v)$ is cuspidal at infinity, since $\ll{e_0,v}=0$. Since $\theta(v)=\sum_{[I]}\lambda_{[I]} \vartheta_I$ is a linear combination of theta series corresponding to quadratic forms in the same genus with $\sum_{[I]}\lambda_{[I]}=0$, it is cuspidal. This is a classical result by Siegel, generalized to the totally real field setting in \cite{walling}.

\end{proof}

\section{Computing preimages}\label{sec:5}

We now consider the problem of computing preimages of the Shimura map. This is, given $\xi\in\OO^+$, and given a newform $g$ of weight $\mathbf{2}$, to construct a form $f$ of weight $\mathbf{3/2}$ such that $\Shim_\xi(f)=g$.

\medskip

Let $\id{c}$ be an integral ideal, and suppose that $B$ is a totally definite quaternion algebra having an Eichler order $R$ of discriminant $\id{c}$ (such $B$ exists if $d$ is even, or if $d$ is odd and $\id{c}$ is not a square).

\begin{proposition}\label{prop:cuspidalidad}
 
Let $v\in S(R)$. Then, $\Shim_\xi(\theta(v))$ is a cusp form.
 
\end{proposition}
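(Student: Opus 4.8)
The plan is to reduce to the case of a simultaneous Hecke eigenform, and then to separate the cuspidal and Eisenstein parts of the weight-$\mathbf{2}$ space by their Hecke eigenvalues. Since $\theta$ is $\TT_0$-linear (Theorem~\ref{teo:theta-H}) and $\Shim_\xi$ is Hecke linear (Theorem~\ref{thm:shimura-map}), the composition $\Shim_\xi\circ\theta$ commutes with every $T_\pp$ for $\pp\nmid4\D$. Because $S(R)$ is preserved by $\TT_0$ and $\TT_0$ consists of commuting normal operators, $S(R)$ admits a basis of simultaneous $\TT_0$-eigenforms; by linearity it therefore suffices to treat $v\in S(R)$ a $\TT_0$-eigenform, say $T_\pp v=a_\pp v$ for all $\pp\nmid\D$.

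First I would record that $g:=\Shim_\xi(\theta(v))\in M_\mathbf{2}(\id{b}/2)$ is then a $T_\pp$-eigenform for every $\pp\nmid4\D$, with the \emph{same} eigenvalue $a_\pp$. Indeed, $\theta(v)\in S^+_\mathbf{3/2}(4\D,\psi)$ by Theorem~\ref{teo:theta-H}, and $T_\pp(\theta(v))=\theta(T_\pp v)=a_\pp\,\theta(v)$; applying $\Shim_\xi$ and using its Hecke linearity gives $T_\pp g=a_\pp g$.

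Next I would split $M_\mathbf{2}(\id{b}/2)=S_\mathbf{2}(\id{b}/2)\oplus\mathcal E$ into its cuspidal and Eisenstein parts. This decomposition is stable under the $T_\pp$ with $\pp\nmid4\D$, so the Eisenstein projection $g_{\mathcal E}$ of $g$ is again a $T_\pp$-eigenform with eigenvalue $a_\pp$. Writing $g_{\mathcal E}$ in a basis of Eisenstein Hecke eigenforms, each such eigenform has $T_\pp$-eigenvalue of the shape $\chi_1(\pp)+N(\pp)\chi_2(\pp)$ for a pair of Hecke characters $\chi_1,\chi_2$; thus if $g_{\mathcal E}\neq0$ the eigensystem $(a_\pp)_{\pp\nmid4\D}$ would coincide with one of this Eisenstein shape.

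The crux --- and the step I expect to be the main obstacle --- is to rule this out. Here one uses that the $\TT_0$-eigensystems occurring on $S(R)$ are exactly those of weight-$\mathbf{2}$ \emph{cuspidal} newforms on $\GL_2(F)$ of level dividing $\D$, via the Eichler/Jacquet--Langlands correspondence (the Eisenstein part $E(R)$ being precisely the non-cuspidal complement). Such a cuspidal eigensystem cannot agree with an Eisenstein one at almost all $\pp$: either one invokes the Ramanujan bound $|a_\pp|\le 2\,N(\pp)^{1/2}$, which for large $N(\pp)$ is incompatible with $|\chi_1(\pp)+N(\pp)\chi_2(\pp)|\ge N(\pp)-1$ when $\chi_1,\chi_2$ are unitary; or, more self-containedly, one notes that the $L$-series of an Eisenstein eigenform factors as a product of two abelian Hecke $L$-functions and hence has a pole, which the cuspidal origin of $(a_\pp)$ forbids. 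By strong multiplicity one no Eisenstein eigenform can share the eigensystem $(a_\pp)_{\pp\nmid4\D}$ of a cuspidal one, so $g_{\mathcal E}=0$. Therefore $g=\Shim_\xi(\theta(v))\in S_\mathbf{2}(\id{b}/2)$, as claimed.
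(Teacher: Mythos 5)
Your proposal is correct and takes essentially the same route as the paper: reduce to a $\TT_0$-eigenvector $v$, observe that $g=\Shim_\xi(\theta(v))$ inherits the eigensystem by the $\TT_0$-linearity of $\theta$ and $\Shim_\xi$, and kill the Eisenstein component by comparing the cuspidal eigenvalue bound with the Eisenstein eigenvalue bound $\vert c(\pp,E)\vert\geq N(\pp)-1$. The only differences are minor: the paper separates $g$ from the Eisenstein part via Petersson orthogonality against each Eisenstein series (rather than your projection onto $\mathcal{E}$, which is the same idea) and deliberately uses Shahidi's unconditional bound $\vert\omega_\pp\vert\leq 2N(\pp)^{7/10}$ instead of the full Ramanujan bound $2N(\pp)^{1/2}$, which is a deeper input for Hilbert modular forms; note also that your ``more self-contained'' alternative via a pole of the $L$-series is inaccurate when both characters $\chi_1,\chi_2$ are nontrivial (the product $L(\chi_1,s)L(\chi_2,s-1)$ is then entire), though your strong-multiplicity-one fallback is sound.
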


\begin{proof}
We can assume that $v$ is a $\TT_0$-eigenvector. Denote $g=\Shim_\xi(\theta(v))$. Then if for $\pp\nmid\id{c}$ we let $\omega_\pp$ denote the $\pp$-th eigenvalue of $v$, since the maps $\theta$ and $\Shim_\xi$ are $\TT_0$-linear, we have that $T_\pp g=\omega_\pp g$.

By the theory of Hilbert Eisenstein series, for which we refer to \cite{wiles} and \cite{atwill}, it suffices to prove that $g$ is orthogonal to every Eisenstein series $E$.

Let $\pp\nmid\id{c}$. We have that $T_\pp E=c(\pp,E)E$ (see \cite[Proposition 3,3]{atwill}). Then, the self-adjointness of the Petersson inner product implies that
\[
 \omega_\pp\ll{g,E}=c(\pp,E)\ll{g,E}.
\]
This implies that $\ll{g,E}=0$, since by \cite{shahidi} we have that $\vert \omega_\pp \vert\leq 2N(\pp)^{7/10}$, whereas by the definition of $E$ (see \cite[Proposition 3.1]{atwill}) we have that $\vert c(\pp,E)\vert\geq N(\pp)-1$.

We finish by remarking that though in \cite{atwill} the authors consider weights $\mathbf{k}\geq\mathbf{3}$, the results we used are still valid in weight $\mathbf{2}$ when $F\neq \Q$. The case $F=\Q$ follows by the same arguments, taking special care with the definition of the Eisenstein series of weight $2$ (see \cite{wiles}).
\end{proof}

We have then the following diagram of $\TT_0$-linear maps, where $J-L$ denotes the Jacquet-Langlands correspondence:
\[
\xymatrix{
S(R)\ar[rr]^{J-L} \ar[dr]^{\theta} & & S_{\mathbf{2}}(\id{c})
\ar@{..>}@/_/[dl]
\\ & S^+_{\mathbf{3/2}}(4\id{c},\psi) \ar@/_/[ur]_{\Shim_\xi}}.
\]

\begin{remark}\label{rmk:nivel/4}
 
According to Theorem~\ref{thm:shimura-map}, the map $\Shim_\xi$ in principle divides the level by $2$, and hence for $v\in S(R)$, the form $\Shim_\xi(\theta(v))$ has level $\id{2c}$ instead of the level $\id{c}$ claimed in the diagram. In the classical setting, when $\id{c}$ is odd and square-free, (a small part of) the theory of Kohnen asserts that when applied to forms in the Kohnen plus space, the Shimura map divides the level by $4$. In the setting of Hilbert modular forms, the theory of the Kohnen plus space is currently under development by Hiraga and Ikeda. The case when $\id{c}=\OO$ has been achieved in \cite{ikeda}, and the general (odd, square-free) case is expected to be developed soon.
 
\end{remark}

We summarize this discussion in the next theorem, which is the main result of this article.

\begin{theorem}\label{thm:teo_ppal}
 Let $g\in S_{\mathbf{2}} (\id{c})$ be a newform, and let $v_g \in S(R)$ be a $\TT_0$-eigenvector with the same eigenvalues as $g$. Let $\tilde{g}=\Shim_\xi(\theta(v_g))$. If $\tilde{g}$ has level $\id{c}$, then $\tilde{g}$ is a multiple of $g$.
\end{theorem}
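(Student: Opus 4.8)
The plan is to leverage the Hecke-equivariance already established in the previous sections together with the multiplicity-one structure of the space of Hilbert cusp forms. First I would record that $\tilde{g}$ is a Hecke eigenform carrying the same eigenvalue system as $g$ away from the level. Indeed, since $v_g$ is a $\TT_0$-eigenvector with the same eigenvalues as $g$, and since both $\theta$ (Theorem~\ref{teo:theta-H}) and $\Shim_\xi$ (Theorem~\ref{thm:shimura-map}) are $\TT_0$-linear, for every prime $\pp\nmid 4\id{c}$ we obtain
\[
 T_\pp\,\tilde{g}=T_\pp\,\Shim_\xi(\theta(v_g))=\Shim_\xi(\theta(T_\pp v_g))=c(\pp,g)\,\tilde{g},
\]
so that $\tilde{g}$ is a $T_\pp$-eigenform with eigenvalue $c(\pp,g)$ for all but finitely many $\pp$. (Note that we only control the primes not dividing $4\D=4\id{c}$, since the maps $\theta$ and $\Shim_\xi$ are $\TT_0$-equivariant away from $4\D$; this loss is harmless for what follows.)

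Next I would place $\tilde{g}$ in the correct space. By Proposition~\ref{prop:cuspidalidad} the form $\tilde{g}$ is cuspidal, and by hypothesis it has level $\id{c}$, so $\tilde{g}\in S_{\mathbf{2}}(\id{c})$. At this point I would invoke the theory of newforms (Atkin--Lehner--Li) and strong multiplicity one for Hilbert modular forms: the space $S_{\mathbf{2}}(\id{c})$ decomposes as a direct sum, indexed by newforms $h$ of level $\id{c}_h\mid\id{c}$, of the oldform spaces they generate, and each such summand is singled out by the eigenvalue system $\{c(\pp,h)\}_{\pp\nmid\id{c}}$. Strong multiplicity one ensures that distinct newforms produce distinct systems, since two newforms agreeing at cofinitely many primes must coincide; hence the eigenvalue system of $\tilde{g}$, which matches that of $g$ for every $\pp\nmid 4\id{c}$, can arise only from $h=g$.

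Finally I would use that $g$ is a newform of level \emph{exactly} $\id{c}$. The oldform space it generates inside $S_{\mathbf{2}}(\id{c})$ is indexed by the divisors of $\id{c}/\id{c}=\OO$, so it is one-dimensional and equals $\C\,g$. Since $\tilde{g}$ lies in the $g$-isotypic summand, it is a scalar multiple of $g$ (possibly zero), which is exactly the assertion.

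The hard part will not be any computation but rather the justification of the structural inputs in the Hilbert setting, namely strong multiplicity one and the newform decomposition of $S_{\mathbf{2}}(\id{c})$, together with the verification that the eigenvalues at primes $\pp\nmid 4\id{c}$ already suffice to separate the isotypic components---which they do, as strong multiplicity one requires only cofinitely many primes. The genuinely analytic ingredient (the comparison of the Hecke eigenvalue bounds against the Eisenstein coefficients) has already been absorbed into Proposition~\ref{prop:cuspidalidad}, so once the standard structure theory is in place this last step is essentially formal.
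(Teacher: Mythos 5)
Your proposal is correct and takes essentially the same route as the paper: the paper's proof likewise combines the $\TT_0$-linearity of $\theta$ and $\Shim_\xi$ with cuspidality (Proposition~\ref{prop:cuspidalidad}) and concludes by the multiplicity one theorem of \cite{miyake}, which is exactly the strong multiplicity one and newform-decomposition input you spell out in detail. Your observation that the eigenvalues are only controlled at primes $\pp\nmid 4\id{c}$ is a harmless refinement the paper leaves implicit, since multiplicity one requires agreement only at cofinitely many primes.
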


\begin{proof}
 First, note that such $v_g$ exists (and is unique) due to the Jacquet-Langlands theory (see for example \cite[Proposition 2.12]{Hida}). Since the operators $\theta$ and $\Shim_\xi$ are $\TT_0$-linear, the cusp form $\tilde{g}$ has the same eigenvalues as $g$, and then by the multiplicity one result given in \cite{miyake} we have that $\tilde{g}$ is a multiple of $g$.
\end{proof}

\begin{remark}
 It could happen that $\tilde{g}$ is the zero cusp form. Nevertheless, for odd and square-free $\id{c}$, the theory of the Kohnen space asserts that:
 \begin{itemize}
  \item A linear combination of the maps $\Shim_\xi$ is an isomorphism between the new subspace of $S^+_{\mathbf{3/2}}(4\id{c},\psi)$ and the new subspace of $S_{\mathbf{2}} (\id{c})$ (which in particular implies that there exists $\xi$ such that $\Shim_\xi(\theta(v_g))\neq0$).
  \item If $\theta(v_g)$ is not zero, then $\theta(v_g)$ is a newform mapping to a non-zero multiple of $g$ under this isomorphism, by a strong multiplicity one result in $S^+_{\mathbf{3/2}}(4\id{c},\psi)$.
 \end{itemize}
\end{remark}

\begin{remark}\label{rmk:shim-iso}
 We expect $\tilde{g}$ to have level $\id{c}$. Since we know that in the worst case it has level $\id{2c}$, then it must be a linear combination of $g(z)$ and $g(2z)$. In any given example, this combination can be found in terms of Fourier coefficients, and we can verify that $\tilde{g}$ has actually level $\id{c}$ by seeing that the coefficient corresponding to $g(2z)$ is null.
\end{remark}

The main issue is then to know whether there exists a quaternion algebra $B$ and an Eichler order $R$ such that $\theta(v_g) \neq 0$. We assume from now on that $\id{c}$ is odd and square-free.

\medskip

The following conjecture is just a naive generalization to the Hilbert setting of the result due to B\"ocherer and Schulze-Pillot for classical modular forms of odd and square-free level (see \cite[page 378]{BSP}).

\begin{conjecture}\label{conj:notzero}
The form $\theta(v_g)$ is non zero if and only if $L(g,1) \neq 0$ and the quaternion algebra $B$ ramifies exactly at the archimedean primes and at all primes $\id{p}$ dividing $\id{c}$ where the Atkin-Lehner involution $W_\pp$ acts on $g$ with eigenvalue $w_\pp=-1$.
\end{conjecture}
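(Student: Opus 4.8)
The plan is to reduce the non-vanishing of $\theta(v_g)$ to that of its Petersson norm and to evaluate the latter by a Rallis inner product formula, thereby factoring it as the central value $L(g,1)$ times a product of purely local contributions. Since $v_g\in S(R)$ is cuspidal, Theorem~\ref{teo:theta-H} guarantees that $\theta(v_g)$ is a cusp form in $S^+_\mathbf{3/2}(4\id{c},\psi)$, so its Petersson norm $\langle\theta(v_g),\theta(v_g)\rangle_{\mathrm{Pet}}$ is defined and $\theta(v_g)\neq0$ if and only if this norm is non-zero. The first step is to recognize $\theta$ as the geometric realization of the theta correspondence for the dual pair $(\widetilde{\SL}_2,\SO_3)$, where $\SO_3$ is the special orthogonal group of the ternary space $(V,\Delta)$; since $\SO_3\cong B^\times/F^\times$, the Jacquet--Langlands transfer places $v_g$ inside the automorphic representation $\pi_B$ of $B^\times$ attached to $g$.

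Next I would apply the doubling (Rallis inner product) method to this dual pair, which expresses the norm as
\[
 \langle\theta(v_g),\theta(v_g)\rangle_{\mathrm{Pet}}=c\cdot L(g,1)\cdot\prod_{v}\alpha_v,
\]
with $c\neq0$ a normalizing constant and $\alpha_v$ a local zeta integral at each place $v$. The $L$-value is the untwisted central value, because the doubling integral pairs $v_g$ with itself; this untwisted central-value formula is a global counterpart to the Waldspurger-type formula for individual Fourier coefficients announced in Section~\ref{sec:5}, and producing it over $F$ with explicit local factors is the principal analytic input.

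The third step is the local analysis establishing that $\alpha_v\neq0$ exactly under the stated ramification hypothesis. At the archimedean places the relevant lift is the one matching the holomorphic data of weights $\mathbf{3/2}$ and $\mathbf{2}$, which is non-zero precisely because $B$ is totally definite, so the forced archimedean ramification contributes $\alpha_\tau\neq0$. At a finite prime $\pp\nmid\id{c}$ the datum is unramified and $\alpha_\pp$ is a standard non-zero computation. At a prime $\pp\mid\id{c}$, local theta dichotomy for $(\widetilde{\SL}_2,\SO_3)$ asserts that the local lift survives for exactly one of the two quaternion algebras over $F_\pp$, the distinguished one being selected by the local root number; identifying this sign with the Atkin--Lehner eigenvalue $w_\pp$ shows that $\alpha_\pp\neq0$ if and only if $B_\pp$ is a division algebra when $w_\pp=-1$ and split when $w_\pp=+1$. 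Collecting these, $\prod_v\alpha_v\neq0$ if and only if $B$ ramifies exactly at the archimedean primes and at the primes $\pp\mid\id{c}$ with $w_\pp=-1$, whence the norm is non-zero if and only if in addition $L(g,1)\neq0$, which is the desired equivalence.

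The hard part will be making the doubling formula rigorous over $F$ with the correct archimedean and ramified local factors, and in particular carrying out the dichotomy computation at the primes $\pp\mid\id{c}$ so as to match the non-vanishing of $\alpha_\pp$ with the Atkin--Lehner sign $w_\pp$. This matching, together with the guarantee that $\theta(v_g)$ behaves as a newform in the plus space so that the $\alpha_v$ are evaluated at the correct test vectors, rests on the Hilbert analogue of Kohnen's plus-space theory, which as noted in Remark~\ref{rmk:nivel/4} is still under development by Hiraga and Ikeda (see \cite{ikeda}). Until that foundational theory is in place the precise normalizations, and hence the ``only if'' direction of the statement, cannot be pinned down, which is why the result is recorded here as a conjecture rather than a theorem.
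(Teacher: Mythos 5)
There is nothing in the paper for your argument to be compared against: the statement is recorded precisely as Conjecture~\ref{conj:notzero}, introduced as ``a naive generalization to the Hilbert setting'' of the theorem of B\"ocherer and Schulze-Pillot \cite{BSP}, and the only support the paper offers is the numerical verification for $F=\Q(\sqrt5)$ in Section~\ref{secc:6}. Your submission is likewise not a proof, as you yourself concede in the final paragraph; it is a roadmap. The roadmap is the standard and reasonable one (it is essentially the circle of ideas of Waldspurger \cite{waldspurger}, and the Rallis inner product formula with local dichotomy is how the representation-theoretic analogue of such nonvanishing criteria is established), but it stops exactly at the steps that constitute the actual content of the conjecture, so it cannot be accepted as a proof of the statement.

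The concrete gap is the passage from representations to vectors. The Rallis inner product method, even carried out in full, gives a criterion for the nonvanishing of the global theta lift of the automorphic representation $\pi_B$ attached to $v_g$: the local factors $\alpha_v$ in your displayed formula are zeta integrals depending on the \emph{choice of Schwartz function}, and the conjecture concerns one specific vector, namely $\theta(v_g)$ built from the characteristic functions of the lattices $L_I$ for the chosen Eichler order $R$ of discriminant $\id{c}$. Nonvanishing of the lift of $\pi_B$ for \emph{some} test data does not imply $\alpha_\pp\neq0$ for \emph{these} data; conversely, a degenerate vanishing of some $\alpha_\pp$ for the lattice vectors could force $\theta(v_g)=0$ even when $L(g,1)\neq 0$ and $B$ has the prescribed ramification, which would break the ``if'' direction. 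Ruling this out requires an explicit local computation at each $\pp\mid\id{c}$ (and at the dyadic places, where the plus-space structure of Proposition~\ref{prop:theta-tern} intervenes) showing that the Eichler-order vectors are genuine test vectors and that the dichotomy sign matches the Atkin--Lehner eigenvalue $w_\pp$; this is precisely the input that the classical proof in \cite{BSP} supplies over $\Q$ and that, in the Hilbert setting, is tied to the still-developing Kohnen plus-space and newform theory of Hiraga--Ikeda \cite{ikeda} noted in Remark~\ref{rmk:nivel/4}. A smaller but real point of care: your formula should be stated with the correctly normalized central value and with $\kappa$ and the $\alpha_v$ pinned down compatibly with the normalization of $\theta$ in Theorem~\ref{teo:theta-H}, since the equivalence you want is sensitive to which local factors can vanish. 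As it stands, your text is a sensible research plan toward the conjecture, not a proof of it.
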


If $L(g,1)\neq0$, the functional equation safisied by $L(g,s)$ implies that $(-1)^d\prod_{\pp\mid\id{c}} w_\pp=1$. Then an algebra $B$ as in the conjecture exists, and it is unique up to isomorphism.

\medskip

The relation between Fourier coefficients and central values of twisted $L$-series is given by the following theorem, which was proved for classical forms in \cite[page 378]{BSP} and in a more general setting for Hilbert modular forms in \cite{mao} and \cite{shim-coef}, generalizing Waldspurger's results over $\Q$. 

\begin{theorem}\label{thm:schulze-pill}
Let $g \in S_{\mathbf{2}}(\id{c})$ be a newform such that $f= \theta(v_g)$ is non-zero. Let $\xi\in\OO^+$ be such that $-\xi$ is a fundamental discriminant. Let $\epsilon_\xi$ be the Hecke character corresponding to $F(\sqrt{-\xi})/F$. Then
\begin{equation}\label{eqn:waldspurger}
 \vert\lambda(\xi,\OO,f)\vert^2 = \kappa  L(g,\epsilon_{\xi},1)\prod_{\id{p} \mid \id{c}}(c(\id{p},g)-\epsilon_\xi(\id{p})),
\end{equation}
where $\kappa$ is a non-zero constant, and $L(g,\epsilon_{\xi},s)$ is the twist of the L-series of $g$ by $\epsilon_{\xi}$.
\end{theorem}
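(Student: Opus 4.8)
The plan is to reduce the identity to the known Waldspurger-type formulas of \cite{mao} and \cite{shim-coef} after first re-expressing the left-hand side in purely quaternionic terms. By Theorem~\ref{teo:theta-H} we have $\lambda(\xi,\OO,f)=\ll{e_\xi,v_g}$, and since the classes $[I]$ are pairwise orthogonal for $\ll{\,,\,}$, writing $v_g=\sum_{[J]}\lambda_{[J]}[J]$ gives
\[
\lambda(\xi,\OO,f)=\sum_{[J]\in\cl(R)} a(\xi,\OO,[J])\,\overline{\lambda_{[J]}}.
\]
Here $a(\xi,\OO,[J])=\#\{[x]\in L_J:-\Delta(x)=\xi\}$ counts, modulo $\OO$, the elements $x\in R_r(J)$ whose characteristic polynomial has discriminant $-\Delta(x)=\xi$. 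Since $-\xi$ is a fundamental discriminant, each such $x$ generates the maximal order of $F(\sqrt{-\xi})$, so it determines an optimal embedding of that order into $R_r(J)$. Thus $\lambda(\xi,\OO,f)$ is exactly the $v_g$-weighted optimal embedding number that appears in the quaternionic formulation of the method of Gross and of B\"ocherer--Schulze-Pillot.

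The second, and central, step is to identify $\vert\lambda(\xi,\OO,f)\vert^2=\vert\ll{e_\xi,v_g}\vert^2$ with a period. The standard mechanism (Rallis inner product, following Waldspurger) turns the square of this linear functional evaluated at the eigenvector $v_g$ into $\ll{v_g,v_g}$ times the square of the projection of $e_\xi$ onto the $g$-isotypic line, and this period is what the Waldspurger-type formula evaluates. I would invoke the Hilbert versions proved in \cite{mao} and \cite{shim-coef}, which give that this period equals a non-zero constant (depending only on $g$, $F$ and the fixed normalizations, hence absorbed into $\kappa$) times the central value $L(g,\epsilon_\xi,1)$. The Jacquet--Langlands transfer $v_g\leftrightarrow g$ ensures that the relevant automorphic period is that of $g$ itself, so the $L$-function appearing is precisely the twist $L(g,\epsilon_\xi,s)$; I would cite this identification rather than reprove it.

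It then remains to produce the local factors $\prod_{\pp\mid\id{c}}(c(\pp,g)-\epsilon_\xi(\pp))$. These arise from the local optimal embedding numbers at the primes $\pp\mid\id{c}$, where $B$ ramifies and $R_\pp$ is the maximal order of the division algebra $B_\pp$. Comparing the local count of embeddings of $(\OO[\sqrt{-\xi}])_\pp$ into $R_\pp$ with the corresponding local zeta integral and local $L$-factor produces, at each such prime, a non-zero multiple of $c(\pp,g)-\epsilon_\xi(\pp)$; because $\id{c}$ is odd and square-free, each $R_\pp$ is the maximal order in a ramified quaternion algebra over a non-dyadic field and the local analysis is the classical one. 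Collecting the local contributions assembles the stated product.

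The main obstacle I anticipate is reconciling the normalizations across the sources and the half-integral weight conventions fixed in Section~\ref{sec:3} --- in particular the factor $N(\id{m})$ absorbed into the Hecke operators $T_\id{m}$ and the choice of $\psi$ as the character of $F(\sqrt{-1})/F$ --- so that the local factors assemble into exactly $\prod_{\pp\mid\id{c}}(c(\pp,g)-\epsilon_\xi(\pp))$ and no spurious prime contributes. Since $\kappa$ is only required to be non-zero, I do not need to pin down its value, which removes the most delicate bookkeeping; it suffices to check that each local factor at $\pp\mid\id{c}$ is a non-zero constant times $c(\pp,g)-\epsilon_\xi(\pp)$ and that the global period matches $L(g,\epsilon_\xi,1)$ up to a non-zero constant.
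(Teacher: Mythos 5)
You should note first that the paper does not actually prove Theorem~\ref{thm:schulze-pill}: it is stated as a known result, imported from \cite{BSP} in the classical case and from \cite{mao} and \cite{shim-coef} in the Hilbert case. Your overall plan --- use Theorem~\ref{teo:theta-H} to write $\lambda(\xi,\OO,f)=\ll{e_\xi,v_g}$, reinterpret this as a $v_g$-weighted count of optimal embeddings of the maximal order of $F(\sqrt{-\xi})$ (using that $-\xi$ is a fundamental discriminant, so the conductor of $\OO[x]$ is trivial and $\OO[x]$ is maximal), and then outsource the period-to-central-value identity to \cite{mao} and \cite{shim-coef} --- is therefore the same move the paper makes, with a correct and standard quaternionic dictionary added on top. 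Two small points: the pairing $\ll{\,,\,}$ must be taken bilinear in the second argument for $\theta$ to be linear as in Theorem~\ref{teo:theta-H}, so the conjugates in your expansion of $\ll{e_\xi,v_g}$ should be dropped (harmless, since $v_g$ may be normalized with totally real coefficients), and the two-to-one discrepancy between classes $[x]$ with $-\Delta(x)=\xi$ and optimal embeddings is absorbed into $\kappa$.

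The one substantive piece of mathematics you add beyond the citations --- the local analysis at $\pp\mid\id{c}$ --- contains a genuine error. You assert that at each $\pp\mid\id{c}$ the algebra $B$ ramifies and $R_\pp$ is the maximal order of the division algebra $B_\pp$. But $\id{c}=\D$ is the discriminant of the \emph{Eichler} order $R$, i.e.\ the product of the finite ramified primes of $B$ with the Eichler level; under Conjecture~\ref{conj:notzero}, $B$ ramifies at $\pp\mid\id{c}$ only where the Atkin--Lehner eigenvalue is $w_\pp=-1$, and in the paper's own example in Section~\ref{secc:6} the algebra $B$ is unramified at every finite place, the completion $R_\pp$ at the prime $\pp=\id{c}$ being a level-$\pp$ Eichler order in $M_2(F_\pp)$. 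Consequently your maximal-order computation produces the factor $c(\pp,g)-\epsilon_\xi(\pp)$ only at the ramified primes and is silent at the split primes dividing $\id{c}$, where the same factor must instead come from the count of optimal embeddings into a level-$\pp$ Eichler order (equivalently, from the local integral with local component a twist of Steinberg); without that case the product in \eqref{eqn:waldspurger} is not assembled, and the argument fails in precisely the situation the paper computes. The repair is routine --- both local cases are treated in \cite{BSP}, and adelically in \cite{mao} --- but as written this step is wrong, so you should either carry out the split-prime local count or state that you are quoting the full local evaluation, as the paper in effect does.
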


In particular, under the above assumptions, this conjecture states that $L(g, \epsilon_{\xi}, 1)= 0$ if and only if $\lambda(\xi,\OO,f)=0$, if $\xi$ is such that the product over $\pp\mid\id{c}$ in the right hand side of \eqref{eqn:waldspurger} is non-zero.
\section{An example}\label{secc:6}

We let $F=\Q(\sqrt{5})$, which has trivial narrow class group, and we denote $\omega=\tfrac{1+\sqrt{5}}{2}$. We let $E$ be the elliptic curve over $F$ given by
\[
 E: \quad y^2+xy+\omega y = x^3 -(1+\omega) x^2.
\]
This curve has prime conductor, equal to $\id{c}=(5+2\omega)$, and satisfies that $L(E,1)\neq0$. The space $M_{\mathbf{2}}(\id{c})$ has dimension 2, and it is generated by an Eisenstein series and a newform $g$ which corresponds to $E$. Its first eigenvalues are given in \cite{lass-expl}; we only state that $c(\id{c},g)=-1$. According to Conjecture~\ref{conj:notzero}, we choose $B$ to be the unramified totally definite algebra over $F$, i.e. the algebra generated by $1,i,j,k$, where $i^2=j^2=-1,ij=k=-ji$. If $R$ is an Eichler order of discriminant $\id{c}$ in $B$, then the theory of Jacquet-Langlands asserts that there exists $v\in S(R)$ which is an eigenvector for $\TT_0$ with the same eigenvalues as $g$.

Using the algorithm presented in \cite{ariel-yo}, with the aid of \texttt{SAGE} (\cite{sage}), we obtain the desired order, which is given by
\[
 R= \left<\dfrac{1-(\omega+1)j-(\omega+10) k}{2}, \dfrac{ i- \omega j + -(\omega+21) k}{2}, j-5k,(5\omega-3)k \right>_\OO.
\]
This order has class number equal to 2, and hence there is no need to compute the Hecke operators, since $S(R)$ is $1$-dimensional. A set of representatives for the set of $R$-ideal classes is given by $R$ and the ideal $I$ given by
\[
 I= \left<\dfrac{1-(\omega+1)j-(\omega+38) k}{2}, \dfrac{ i- \omega j + -(\omega+49) k}{2}, j+3k,(5\omega-3)k \right>_\OO.
\]
We have that $v=[R]-[I]$ is an eigenvector for the whole Hecke algebra, since $\deg(v)=0$.

Let $f=\theta(v)$. We consider $L_R$ and $L_I$ as lattices of dimension $6$ over $\Z$, and use a lattice basis reduction algorithm on the integral, positive definite quadratic form $\Tr_{F/\Q}\circ(-\Delta)$ to compute the Fourier coefficients $\lambda(\xi,\OO,f)$, with $\Tr_{F/\Q}(\xi)\leq 100$ and $-\xi$ a fundamental discriminant. We find that there are non-zero coefficients, thus verifying Conjecture~\ref{conj:notzero}. The zero coefficients split into two families, which we consider below.

\begin{itemize}
\item {\bf The trivial zeros} are the ones such that
\[
 \lambda(\xi,\OO,\theta([R]))=\lambda(\xi,\OO,\theta([I]))=0.
\]
For this zeros Theorem \ref{thm:schulze-pill} is easy to verify. The local-global principle for quadratic forms implies that the non existence of points $x\in L_R \cup L_I$ with $-\Delta(x)=\xi$ is equivallent to the equality $\epsilon_\xi(\id{c})= -1$, so in this case both sides of \eqref{eqn:waldspurger} vanish trivially.
\item {\bf The non-trivial zeros} are the ones such that 
\[
 \lambda(\xi,\OO,\theta([R]))=\lambda(\xi,\OO,\theta([I]))\neq0.
\]
For these zeros, we have that $\epsilon_\xi(\id{c})=1$, and hence by \eqref{eqn:waldspurger} we have that $L(g,\epsilon_{\xi},1)=0$. The non-trivial zeros with $\Tr_{F/\Q}(\xi) \le 100$ are
\[
35+8w, 39+15w, 47-9w, 51-5w, 62-27w.
\]
For these $\xi$, the Birch and Swinnerton-Dyer conjecture predicts that the rank of the quadratic twist of $E$ by $-\xi$ should be positive (and even, because the sign of the functional equation equals $1$). We verified using $2$-descent that all these curves have rank equal to $2$.
\end{itemize}

\section*{Acknowledgements}
I would like to thank Gonzalo Tornar\'ia, and specially Ariel Pacetti, for their invaluable support.

\end{document}